\newcommand{\bm}[1]{\mbox{\boldmath $#1$}}
\def\nablah{\nabla^{\Sigma}}
\def\F{\mathfrak F}
\newcounter{mnotecount}
\newcommand{\mnotex}[1]
{\protect{\stepcounter{mnotecount}}$^{\mbox{\footnotesize $\bullet$\themnotecount}}$ 
\marginpar{
\raggedright\tiny\em
$\!\!\!\!\!\!\,\bullet $\themnotecount: #1} }
\numberwithin{equation}{section}
\numberwithin{figure}{section}
\newtheorem{theorem}{Theorem}[section]
\newtheorem{lemma}[theorem]{Lemma}
\theoremstyle{definition}
\newtheorem{definition}[theorem]{Definition}
\newtheorem{example}[theorem]{Example}
\theoremstyle{remark}
\newtheorem{remark}[theorem]{Remark}
\numberwithin{equation}{section}
\theoremstyle{plain}
\newtheorem{corollary}[theorem]{Corollary}
\newtheorem{proposition}[theorem]{Proposition}
\renewcommand{\div}{\text{div}}
\newcommand{\divm}{\text{\emph{div}}}
\DeclareMathOperator{\Ric}{Ric}
\DeclareMathOperator{\tr}{tr}
\newcommand{\II}{\operatorname{{I\hspace{-0.2mm}I}}}
\begin{document}

\begin{abstract}
In this sequel paper we give a shorter, second proof of the monotonicity of the Hawking mass for time flat surfaces under spacelike uniformly area expanding flows in spacetimes that satisfy the dominant energy condition.  We also include a third proof which builds on a known formula and describe a class of sufficient conditions of divergence type
for the monotonicity of the Hawking mass. 
These flows of surfaces may have connections to the problem in general relativity of bounding the total mass of a spacetime from below by the quasi-local mass of spacelike 2-surfaces in the spacetime. 

\end{abstract}

\title[Time flat surfaces and  the monotonicity of the Hawking mass II]{Time flat surfaces and  the monotonicity of the spacetime Hawking mass II}
\author{Hubert L. Bray}
\address{Dept. of Mathematics,
Duke University,
Durham, NC 27708}
\email{bray@math.duke.edu}

\author{Jeffrey L. Jauregui}
\address{Dept. of Mathematics,
Union College,
Schenectady, NY 12308}
\email{jaureguj@union.edu}

\author{Marc Mars}
\address{Instituto de F\'{\i}sica Fundamental y Matematicas, Universidad de Salamanca, Plaza de la Merced s/n, 37008 Salamanca, Spain}
\email{marc@usal.es}

\date{\today}
\maketitle

\section{Introduction}

A spacetime 
is defined to be a four-dimensional smooth 
manifold equipped with a metric $\langle \cdot, \cdot \rangle$ of Lorentzian signature $(-,+,+,+)$. We assume that the spacetime
is time oriented, i.e.
admits a nowhere zero timelike vector field, defined to be 
future-pointing.

\begin{definition}
An {\it admissible} surface $\Sigma$
is a smooth, closed,  spacelike
surface embedded in a spacetime such that the mean curvature vector
$\vec{H}$ is everywhere spacelike.
\end{definition}

Our conventions for the second fundamental form $\vec \II$ and mean curvature
$\vec{H}$ of $\Sigma$ are 
\begin{equation*}
   \vec\II(W,X) = \mbox{nor}_\Sigma \left( \nabla_W X \right),
 \quad \quad \vec{H} = \tr_{\Sigma} \vec{\II},
\end{equation*}
for vectors $W$ and $X$ tangent to $\Sigma$, where $\nabla$ is the Levi-Civita connection of the spacetime.

\begin{definition}
A normal vector field $\vec{\omega}$ along an admissible surface $\Sigma$ is 
\emph{inward}-spacelike (resp. -achronal)   if $\vec{\omega}$ is everywhere
spacelike (resp. non-timelike)
and $\langle \vec{\omega}, \vec{H} \rangle  >0$. It is called 
\emph{outward}  if 
$-\vec{\omega}$  is inward.
\end{definition}

An admissible surface $\Sigma$ has trivial normal bundle $T^\perp\Sigma$.
Define $\vec{H}^{\perp}$
along $\Sigma$ to be the unique past-directed normal vector orthogonal
to $\vec{H}$ with $\langle  \vec{H}^{\perp}, \vec{H}^{\perp} \rangle =
- \langle \vec{H},\vec{H} \rangle $ and the orientation on the normal bundle so that
$\{ \vec H, \vec H^{\perp} \}$ is positively oriented. We denote by
$\bm{\eta}^{\perp}$ the corresponding volume form on $T^\perp \Sigma$.

\begin{definition}
\label{perp}
Given a positively oriented orthonormal basis $\{ \vec e_r, \vec e_t \}$
with $\vec e_r$ spacelike and $\vec e_t$ timelike and any normal vector
$\vec{v} = a \vec{e}_{r} + b \vec{e}_{t}$, define
$\vec{v}^\perp = b \vec{e}_{r} + a \vec{e}_{t}$.
\end{definition}
Note $\vec{H}^{\perp}$ as defined before is consistent
with Definition \ref{perp}. An equivalent definition 
of $\vec{v}^{\perp}$ can be given in terms of the
Hodge dual with respect to the volume form. More specifically, for any
normal vector $\vec{v}$, $\vec{v}^{\perp}$ is the unique normal vector
satisfying
\begin{eqnarray*}
\bm{\eta}^{\perp} (\vec{w},\vec{v}^{\perp}) = \langle \vec{w},\vec{v} \rangle, 
\quad \quad \forall \; \vec{w} \, \,\, \mbox{normal to } \Sigma.
\end{eqnarray*}
This expression shows in particular
that the definition of $\vec v^{\perp}$ is independent
of the choice of orthonormal basis and that $\vec v^{\perp}$
is the Lorentzian version of a $90$ degree rotation.  Also note that
$\vec{v}$ and $\vec{v}^\perp$ are orthogonal and have the same length,
but if one is spacelike, then the other one is timelike. Note also that 
for outward-spacelike (-achronal) $\vec{v}$, $\vec{v}^{\perp}$ is future-timelike
(-causal).

\begin{definition}
A {\it regular} family of surfaces is defined to be a smooth family of 
admissible surfaces.
\end{definition}

\begin{definition}
A {\it uniformly area expanding} family of surfaces is defined to be a regular family of surfaces  such that the rate of change of the area density of each surface is itself, 
when the flow velocity is taken to be orthogonal to each surface.
\end{definition}

\begin{definition}
The {\it inverse mean curvature vector} of an admissible surface is defined to be $\vec{I} = \frac{-\vec{H}}{\langle \vec{H}, \vec{H} \rangle}$.  
\end{definition}
Note $\vec I$ is outward-spacelike.
\begin{lemma}
For any uniformly area expanding family of surfaces, the orthogonal flow velocity (i.e., the projection of the flow velocity onto the normal bundle) may be expressed as
\begin{equation*}
   \vec{\xi} = \vec{I} + \beta \vec{I}^\perp
\end{equation*}
for some smooth function $\beta$.
\end{lemma}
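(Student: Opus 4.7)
The plan is to translate the uniform area expansion condition into a single pointwise scalar equation on $\vec\xi$, and then decompose $\vec\xi$ in a natural basis of the normal bundle and read off the claim.

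First, I would invoke the first variation of area. Since $\vec\xi$ is normal to $\Sigma$, for any tangential orthonormal frame $\{e_1,e_2\}$ one has $\langle\nabla_{e_i}\vec\xi,e_i\rangle=-\langle\vec\xi,\vec\II(e_i,e_i)\rangle$, so $\operatorname{div}_\Sigma\vec\xi=-\langle\vec\xi,\vec H\rangle$, giving
\begin{equation*}
\frac{\partial}{\partial t}\,d\mu \;=\; \operatorname{div}_\Sigma\vec\xi\, d\mu \;=\; -\langle\vec\xi,\vec H\rangle\, d\mu.
\end{equation*}
The uniformly area expanding condition $\partial_t\,d\mu = d\mu$ is therefore equivalent to the single pointwise equation $\langle\vec\xi,\vec H\rangle=-1$ on $\Sigma$.

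Next, since $\Sigma$ is admissible the normal bundle is rank two and is spanned pointwise by the orthogonal pair $\{\vec H,\vec H^\perp\}$, with $\vec H$ spacelike and $\vec H^\perp$ timelike. I would write
\begin{equation*}
\vec\xi \;=\; a\,\vec H + b\,\vec H^\perp
\end{equation*}
for smooth functions $a,b$ (smoothness comes from regularity of the flow and the fact that $\langle\vec H,\vec H\rangle>0$ is bounded away from $0$ on each surface). Pairing with $\vec H$ and using $\langle\vec H,\vec H^\perp\rangle=0$ together with the constraint above gives $a\,\langle\vec H,\vec H\rangle=-1$, i.e.\ $a=-1/\langle\vec H,\vec H\rangle$, so that $a\vec H=\vec I$.

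The last step is cosmetic: I would use that the operation $\vec v\mapsto\vec v^\perp$ is $\R$-linear (manifest from Definition \ref{perp}, or from the Hodge-dual characterization given afterwards), so $\vec I^\perp=(-\vec H/\langle\vec H,\vec H\rangle)^\perp=-\vec H^\perp/\langle\vec H,\vec H\rangle$, and hence $\vec H^\perp=-\langle\vec H,\vec H\rangle\,\vec I^\perp$. Substituting, $\vec\xi=\vec I+\beta\,\vec I^\perp$ with $\beta:=-b\,\langle\vec H,\vec H\rangle$, which is smooth. There is no real obstacle: the only things to check carefully are the sign in the first variation (so that $\langle\vec\xi,\vec H\rangle=-1$ and not $+1$, consistent with $\vec I$ being outward-spacelike and $\vec H$ inward-spacelike) and the linearity of the $\perp$ operation under multiplication by a scalar function, which follows directly from the definition.
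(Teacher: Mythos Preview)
Your proof is correct and follows essentially the same approach as the paper: both use the first variation of area to reduce the uniformly area expanding condition to the scalar equation $\langle \vec\xi, -\vec H\rangle = 1$, and both exploit that $\{\vec I, \vec I^\perp\}$ (equivalently $\{\vec H, \vec H^\perp\}$) spans the two-dimensional normal bundle. The only cosmetic difference is that the paper decomposes $\vec\xi$ directly in the $\{\vec I, \vec I^\perp\}$ basis and reads off $\alpha=1$, whereas you decompose in $\{\vec H, \vec H^\perp\}$ and then convert; these are equivalent since $\vec I$ is a scalar multiple of $\vec H$.
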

\begin{proof}
For a regular family of surfaces, $\vec{I}$ is outward-spacelike and $\vec{I}^\perp$ is future-timelike so that together they span the $(1+1)$-dimensional normal bundle.  Hence, the orthogonal flow velocity may be expressed as $\vec{\xi} = \alpha \vec{I} + \beta \vec{I}^\perp$. 
By the first variation formula for area, $\dot{dA} = \langle -\vec{H}, \vec{\xi} \rangle dA$, where $dA$ is the area density, 
so $\alpha = 1$.  Note that the flow is spacelike if and only if $|\beta| < 1$.
\end{proof}

\begin{definition} \label{def:Hawking}
The Hawking mass \cite{hawking} of a smooth spacelike surface $\Sigma$ in a spacetime is defined to be
\begin{equation*}
\label{eqn_hawking}
m_H(\Sigma) = \sqrt{\frac{|\Sigma|}{16\pi}} \left(1 - \frac{1}{16\pi} \int_{\Sigma} \langle \vec H, \vec H \rangle dA\right)
\end{equation*}
where $\vec H$ is the mean curvature vector, $dA$ is the area density,
and $|\Sigma|$ is the area of $\Sigma$. 
\end{definition}

The Hawking mass of a surface defines a notion of how much mass is inside the surface and is an example of a quasi-local mass functional.  For example, the Hawking mass of a spherically symmetric sphere in the Schwarzschild spacetime of mass $m$ is precisely $m$, which is one of the main motivations for this definition.  

In other examples, however, the Hawking mass may greatly overestimate or underestimate any reasonable notion of how much mass is inside a surface.  For example, consider a round sphere in the $t=0$ slice of the Minkowski spacetime (which represents vacuum).  This sphere has zero Hawking mass, which is reasonable.  The Gauss--Bonnet theorem may be used to prove that any closed, connected  surface in the $t=0$ hypersurface has nonpositive Hawking mass, and negative Hawking mass unless the surface is a round sphere.  This beautiful fact implies that the total mass of the $t=0$ hypersurface of the Minkowski spacetime, which is zero, is bounded below by the Hawking mass of any connected closed surface in this hypersurface.  

Even more remarkably, Huisken and Ilmanen proved that the total mass of a hypersurface with zero second fundamental form in a spacetime with nonnegative energy density is bounded below by the Hawking mass of any connected surface which bounds a finite region in the hypersurface and is not enclosed by a surface of equal or less area \cite{HI}.  This highly nontrivial result relies on a monotonicity formula for the Hawking mass \cite{geroch, jang_wald} under inverse mean curvature flow, as well as the corresponding existence and asymptotics results for the flow \cite{HI}.  

A natural question, then, is to wonder if a similar result could be true for the Hawking mass of surfaces in a spacetime that are not necessarily contained in a hypersurface with zero second fundamental form.  Returning to the Minkowski spacetime example, explicit calculation shows that adding ``squiggles'' to a round sphere in the $t=0$ slice in timelike directions can increase the Hawking mass.  In particular, the Hawking mass can be positive and hence too large to be a lower bound for the total mass of the spacetime, which is zero.  Thus, if we want the Hawking mass of a surface to be a lower bound for the total mass of a spacetime, we cannot allow surfaces with arbitrary timelike squiggles in them.  In the prequel, the \emph{time flat} condition was suggested as a possible means of ruling out such surfaces \cite{TF}.

\begin{definition}\label{def:alpha}
Given an admissible surface $\Sigma$ with mean curvature vector $\vec H$, let 
$\vec\nu_H = -\frac{\vec H}{|\vec H|}$  be the outward-spacelike unit vector parallel to 
$\vec H$.  The induced connection on $T^\perp \Sigma$ is characterized by the connection 1-form 
$\alpha_H(X) = \langle \nabla_X \vec\nu_H, \vec\nu_H^\perp \rangle$, 
where $X$ is any tangent vector $X$ to $\Sigma$.
\end{definition}

The Bartnik data \cite{bartnik} of a surface is equivalent to $(\Sigma, g|_\Sigma, H, \alpha_H)$, where $H = |\vec H|$, plus specifying the angle between the hypersurface and the mean curvature vector $\vec H$.  (For more discussion of Bartnik data, see \cite{DK}.)
Hence, this next definition is stated entirely in terms of the Bartnik data.  

\begin{definition}
A {\it time flat surface} $\Sigma$ is an admissible surface such that $\div_{\Sigma}(\alpha_H) = 0$.
\end{definition}

Note that $\div_\Sigma(\cdot)$ is the divergence on $\Sigma$.
The time flat condition is equivalent to the statement that the mean curvature vector 
$\vec H$, ignoring its length, points ``straight in,'' 
in a  reasonable sense described precisely in the prequel.  
Another important virtue of time flat surfaces is the following.

\begin{theorem}\label{thm:increasing} [Corollary 1.4 of \cite{TF}]
For a uniformly area expanding family of connected time flat surfaces $\Sigma(s)$ whose orthogonal flow velocity is achronal
(e.g., spacelike)
in a spacetime satisfying the dominant energy condition,
\begin{equation*}
   \frac{d }{ds} \left(m_H(\Sigma(s)) \right) \ge 0.
\end{equation*}
\end{theorem}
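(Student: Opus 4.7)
The strategy is to reduce the claim to an inequality on the evolution of the spacetime Willmore energy $Q(s)=\int_{\Sigma(s)}\langle\vec H,\vec H\rangle\,dA$. Because the flow is uniformly area expanding we have $\frac{d|\Sigma|}{ds}=|\Sigma|$, and differentiating Definition \ref{def:Hawking} directly gives
\begin{equation*}
\frac{d m_H}{ds}=\frac{1}{16\pi}\sqrt{\frac{|\Sigma|}{16\pi}}\left(8\pi-\tfrac12 Q-\frac{dQ}{ds}\right),
\end{equation*}
so the theorem is equivalent to the bound $\frac{dQ}{ds}\le 8\pi-\tfrac12 Q$, and the entire geometric content is contained in this upper bound on $\frac{dQ}{ds}$.

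The first step is to compute $\frac{dQ}{ds}$ along the orthogonal flow $\vec\xi=\vec I+\beta\vec I^\perp$ using the Lorentzian first and second variation formulas for the mean curvature vector under normal deformations. This will produce an integrand on $\Sigma$ with four kinds of contributions: a $\Delta_\Sigma$-type derivative of $H$, ambient curvature terms of the spacetime contracted with the normal frame $\{\vec I,\vec I^\perp\}$, intrinsic quantities quadratic in the connection one-form $\alpha_H$ and in $\beta$, and first-order terms in $\nabla_\Sigma H$ and $\nabla_\Sigma\beta$. I would then apply the Gauss equation of the embedding to trade the ambient sectional-curvature portion for the intrinsic Gauss curvature $K_\Sigma$, and invoke Gauss--Bonnet: $\int_\Sigma K_\Sigma\,dA=2\pi\chi(\Sigma)\le 4\pi$ since $\Sigma$ is closed and connected. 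This is precisely what supplies the $8\pi$ constant on the right-hand side of the target inequality.

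The residual ambient-curvature contribution should, after the Gauss reduction, assemble into the spacetime Einstein tensor paired with vectors in the normal bundle. Because $\vec I$ is outward-spacelike and $\vec I^\perp$ is future-timelike, and because the achronal hypothesis forces $|\beta|\le 1$ so that both $\vec\xi$ and $\vec\xi^\perp$ lie in the closed future causal cone, the dominant energy condition guarantees that this contribution has the correct sign for monotonicity.

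The principal obstacle is the remaining purely intrinsic piece, which contains sign-indefinite cross terms such as $\int_\Sigma \beta\,\alpha_H(\nabla_\Sigma H)\,dA$ and $\int_\Sigma H\,\alpha_H(\nabla_\Sigma\beta)\,dA$. The plan is to integrate by parts on $\Sigma$ to collapse these into multiples of $\int_\Sigma f\,\div_\Sigma\alpha_H\,dA$ for appropriate scalars $f$, which vanish by the time flat hypothesis. What should remain must reorganize into a manifestly non-positive combination such as $-|\nabla_\Sigma H/H-\beta\,\alpha_H|^2$ together with a $|\nabla_\Sigma\beta|^2$-term (harmless once one uses $|\beta|\le 1$ and Cauchy--Schwarz). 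Pinning down the exact perfect-square structure that both matches the time flat divergence identity and absorbs every sign-indefinite cross term is the delicate part; once that organization is in place, combining the three ingredients yields $\frac{dQ}{ds}\le 8\pi-\tfrac12 Q$ and hence the theorem.
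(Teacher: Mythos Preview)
Your overall strategy coincides with the paper's: the proof there is exactly to carry out the variation you outline, arriving at the explicit formula of Theorem~\ref{thm:main}, and then to observe that each of its five lines is nonnegative via Gauss--Bonnet, the dominant energy condition, the bound $|\beta|\le 1$, and the time flat hypothesis. However, your sketch has concrete inaccuracies in the parts you yourself flag as ``the delicate part,'' and one omission.

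First, the trace-free second fundamental form terms are missing from your discussion entirely. The second variation produces the quadratic
\[
|\mathring\II_r|^2 + 2\beta\langle \mathring\II_r,\mathring\II_t\rangle + |\mathring\II_t|^2,
\]
and its nonnegativity (again from $|\beta|\le 1$) is one of the five ingredients. Second, the perfect-square structure you propose is wrong in sign and in residue: the actual combination arising from the variation is
\[
\left|\frac{\nabla^\Sigma H}{H}\right|^2 + 2\beta\,\alpha_H\!\left(\frac{\nabla^\Sigma H}{H}\right) + |\alpha_H|^2
= \left|\frac{\nabla^\Sigma H}{H}+\beta\,\alpha_H\right|^2 + (1-\beta^2)|\alpha_H|^2 \ge 0,
\]
not $-|\nabla^\Sigma H/H - \beta\alpha_H|^2$. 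Moreover, after integrating $\alpha_H(\nabla^\Sigma\beta)$ by parts, \emph{all} $\nabla^\Sigma\beta$ contributions are absorbed into $\int\beta\,\div_\Sigma\alpha_H$; there is no leftover $|\nabla^\Sigma\beta|^2$ term to control, so your plan to handle such a term with Cauchy--Schwarz is addressing a phantom. Finally, for the DEC step: $\vec\xi$ itself is outward-achronal, not future-causal; the correct pairing (as in the paper) is $G(-\vec H^\perp,\vec\xi^\perp)\ge 0$, with $-\vec H^\perp$ and $\vec\xi^\perp$ both future-causal.
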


An obvious corollary is the following.

\begin{corollary}
Given a uniformly area expanding family of connected time flat surfaces $\Sigma(s)$, $s \ge 0$, beginning at $\Sigma_0 = \Sigma(0)$, whose orthogonal flow velocity is achronal (e.g., spacelike) 
 in a spacetime satisfying the dominant energy condition such that 
\begin{equation}\label{eqn:asymptotics}
\lim_{s \rightarrow \infty} m_H(\Sigma(s)) = m_{ADM},
\end{equation} 
the total ADM mass of the spacetime, then
\begin{equation}\label{eqn:lowerbound}
   m_{ADM} \ge m_H(\Sigma_0).
\end{equation}
\end{corollary}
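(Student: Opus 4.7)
The plan is to read the corollary off directly from Theorem \ref{thm:increasing} combined with the prescribed limit (\ref{eqn:asymptotics}). The hypotheses of the corollary are set up so that they match exactly the hypotheses of the theorem on every slice $\Sigma(s)$ for $s \ge 0$: the family is uniformly area expanding, connected, and time flat, its orthogonal flow velocity is achronal, and the ambient spacetime satisfies the dominant energy condition. Therefore the theorem applies along the entire flow, yielding $\frac{d}{ds} m_H(\Sigma(s)) \ge 0$ for every $s \ge 0$.

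Next I would integrate this differential inequality. Because $\Sigma(s)$ is a smooth (regular) family and the Hawking mass depends smoothly on the Bartnik data via the formula in Definition \ref{def:Hawking}, the function $s \mapsto m_H(\Sigma(s))$ is smooth; non-negativity of its derivative then gives $m_H(\Sigma(s)) \ge m_H(\Sigma_0)$ for all $s \ge 0$. Passing to the limit $s \to \infty$ and substituting (\ref{eqn:asymptotics}) on the left-hand side produces $m_{ADM} \ge m_H(\Sigma_0)$, which is exactly (\ref{eqn:lowerbound}).

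Realistically there is no substantive obstacle here, since the entire analytic content is packaged inside Theorem \ref{thm:increasing}; the corollary is just the integrated form of its monotonicity conclusion. The only conceptual subtlety worth flagging is the status of the assumed limit (\ref{eqn:asymptotics}) itself: showing that a uniformly area expanding time flat flow in a given asymptotically flat spacetime actually converges in Hawking mass to $m_{ADM}$ is a delicate long-time existence and asymptotics problem (analogous to what Huisken--Ilmanen establish for inverse mean curvature flow in \cite{HI}), but here it enters only as a hypothesis and plays no role in the argument beyond a single substitution in the final limit.
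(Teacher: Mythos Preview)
Your proposal is correct and matches the paper's approach: the paper simply calls this ``an obvious corollary'' of Theorem~\ref{thm:increasing} and gives no further argument, so your spelling-out of the monotonicity-plus-limit step is exactly what is intended.
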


A general existence theory for ``uniformly area expanding time flat flow'' starting from an initial time flat surface $\Sigma_0$ is an important open problem.  Combined with asymptotic results along the lines of equation (\ref{eqn:asymptotics}), a result like equation (\ref{eqn:lowerbound}) would be possible.  There are many important related questions to study here.

Theorem \ref{thm:increasing} is a corollary to the main theorem of the prequel to this paper, stated next.  Each of the five lines in Theorem \ref{thm:main} is nonnegative, proving Theorem \ref{thm:increasing}.    Since the Euler characteristic of a connected surface does not exceed $2$, the first line is nonnegative.  Since
$-\vec H$ and $\vec \xi$ are both outward-achronal, 
$-\vec H^\perp$ and $\vec \xi^\perp$ are both future-causal
so that $G(-\vec H^\perp, \vec \xi^\perp) \ge 0$ by the dominant energy condition.  
Since the flow velocity $\vec \xi$ is achronal, $|\beta| \leq 1$
so that the two middle terms in lines 3 and 4 are controlled, making these two lines nonnegative.  Finally, the fifth line is zero by the time flat assumption.

\newpage

\begin{theorem} [Theorem 1.1 of \cite{TF}]
\label{thm:main}
Given a uniformly area expanding family of surfaces $\Sigma(s)$, 
\begin{align*}
\frac{\frac{d }{ds} \left(m_H(\Sigma(s)) \right)}
{\sqrt{\frac{|\Sigma(s)|}{(16\pi)^3}} } \;\;=\;\;
& \; 4\pi\left(2-\chi(\Sigma(s))\right) \\
&+ \int_{\Sigma(s)} 2 G(-\vec H^\perp, \vec \xi^\perp) \\
&+\int_{\Sigma(s)}  \left[|\mathring \II_r |^2 + 2\beta \langle  \mathring \II_r, \mathring \II_t \rangle + |\mathring \II_t|^2 \right] \\
&+\int_{\Sigma(s)} 2 \left[ \left| \frac{\nabla^\Sigma H}{H}\right|^2 + 2\beta\alpha_H\left(\frac{\nabla^\Sigma H}{H}\right) +|\alpha_H|^2 \right] \\
&+\int_{\Sigma(s)} 2 \beta \cdot \divm_{\Sigma(s)}(\alpha_H)
 \end{align*}
where 
\begin{itemize}
\item the orthogonal flow velocity is $\vec{\xi} = \vec{I} + \beta \vec{I}^\perp$,
\item $\chi(\Sigma(s))$ is the Euler characteristic of $\Sigma(s)$, 
\item $G = \Ric - S \langle \;, \;\rangle$ is the Einstein curvature tensor of the spacetime, 
\item $H = |\vec H|$ is the length of the mean curvature vector $\vec H$ of $\Sigma(s)$,
\item $\vec\nu_H = -\frac{\vec H}{|\vec H|}$ defines the unit outward direction parallel to $\vec H$,
\item $\II_r = -\langle \vec \II, \vec\nu_H\rangle$ and 
$\II_t = -\langle \vec \II, \vec\nu_H^\perp\rangle$ are the components of the second fundamental form $\vec \II$ of $\Sigma(s)$ in the directions parallel and perpendicular to the mean curvature vector, and $\mathring \II_r$ and $\mathring \II_t$ are their traceless parts, respectively,
\item $\alpha_H(X) = \langle \nabla_X \vec\nu_H, \vec\nu_H^\perp \rangle$ for any tangent vector $X$ to $\Sigma(s)$,
\end{itemize}
and the area density $dA$ of $\Sigma(s)$ has been suppressed for convenience.
\end{theorem}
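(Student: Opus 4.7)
The plan is to differentiate the Hawking mass directly and match the resulting expression term-by-term against the five-line sum. First I use the uniformly area expanding condition $\partial_s dA=dA$, which gives $|\Sigma(s)|=e^{\,s}|\Sigma(0)|$, together with the product rule on $m_H$, to reduce the computation to
\begin{equation*}
\frac{\tfrac{d}{ds}m_H(\Sigma(s))}{\sqrt{|\Sigma(s)|/(16\pi)^{3}}}
\;=\; 8\pi \;-\; \tfrac{1}{2}\!\int_{\Sigma}\langle \vec H,\vec H\rangle\,dA \;-\; \tfrac{d}{ds}\!\int_{\Sigma}\langle \vec H,\vec H\rangle\,dA.
\end{equation*}
I then invoke Gauss--Bonnet to split $8\pi = 4\pi(2-\chi(\Sigma))+2\int_{\Sigma} K\,dA$ (where $K$ is the intrinsic Gaussian curvature), producing the first line of the formula and reducing the problem to matching the remaining two terms with lines two through five.

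For the main computation I would calculate the evolution of $\int H^{2}\,dA$ under the normal deformation $\vec\xi=\tfrac{1}{H}\vec\nu_H+\tfrac{\beta}{H}\vec\nu_H^\perp$, by writing $\vec H=-H\vec\nu_H$ and decomposing $\partial_s \vec H$ in the rotating normal frame $\{\vec\nu_H,\vec\nu_H^\perp\}$. The standard first variation of the mean curvature vector then produces a Laplacian-type term in the spacelike component of $\vec\xi$, whose integration by parts yields the quadratic form $|\nabla^\Sigma H/H|^2$ together with $\alpha_H$ contributions (from the spacetime connection acting on the non-parallel transport of $\vec\nu_H$); terms quadratic in the ambient second fundamental form, which, using the decomposition $\II_r=-\langle\vec\II,\vec\nu_H\rangle$ and $\II_t=-\langle\vec\II,\vec\nu_H^\perp\rangle$ together with $\tr_\Sigma\II_r=-H$ and $\tr_\Sigma\II_t=0$, separate into the traceless parts $\mathring\II_r,\mathring\II_t$ plus lower-order $H^{2}$ contributions; and an ambient Ricci term. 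I would then invoke the Gauss equation for the spacelike codimension-$2$ immersion to exchange $2\int_\Sigma K\,dA$ for an expression in ambient sectional curvature plus $\II$-terms, and consolidate the total ambient curvature contribution using $G=\Ric-S\langle\cdot,\cdot\rangle$ together with the $\perp$ operation applied to $\vec H$ and $\vec\xi$ to recognize $2G(-\vec H^\perp,\vec\xi^\perp)$ as the second line. The remaining task is algebraic: complete squares in the $\II$-components and in $(\nabla^\Sigma H/H,\alpha_H)$, where the $\beta$-dependent cross terms come from the mixed $\vec\nu_H/\vec\nu_H^\perp$ content of $\vec\xi$; the one piece that resists completion of a square is precisely $2\beta\,\divm_\Sigma(\alpha_H)$, which appears from a single integration by parts and forms the fifth line.

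The main obstacle is the careful bookkeeping of the first variation of $\vec H$ along this mixed spacelike--timelike deformation in the frame $\{\vec\nu_H,\vec\nu_H^\perp\}$, which rotates both along $\Sigma$ (by $\alpha_H$) and in the flow direction. The signs coming from $\langle\vec\nu_H^\perp,\vec\nu_H^\perp\rangle=-1$ must be reconciled with the Gauss and Codazzi identities written in the spacelike convention, and the cross coefficients $2\beta\langle\mathring\II_r,\mathring\II_t\rangle$ and $2\beta\,\alpha_H(\nabla^\Sigma H/H)$ must each emerge with exactly the value that makes lines three and four quadratic in $(1,\beta)$, so that they are manifestly nonnegative when $|\beta|\le 1$. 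Avoiding double-counting of $\alpha_H$ contributions between the frame-rotation terms and the surviving divergence term $2\beta\,\divm_\Sigma(\alpha_H)$, and correctly using $\vec\xi^\perp=\tfrac{\beta}{H}\vec\nu_H+\tfrac{1}{H}\vec\nu_H^\perp$ to assemble the Einstein-tensor argument in line two, are the delicate accounting points on which the whole identification rests.
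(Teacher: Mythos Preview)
Your outline is a viable route to the identity, but it is not the route this paper takes. The paper's central device is to split the orthogonal velocity as $\vec\xi=\vec\xi_r+\vec\xi_t$ with $\vec\xi_r=\vec I$ and $\vec\xi_t=\beta\vec I^\perp$, and to compute $\dot m_r$ and $\dot m_t$ separately. The point of the split is that each piece can be computed \emph{inside a three-dimensional hypersurface}: $\vec\xi_r$ sweeps out a spacelike ``plane'' $P$, so that $\dot m_r$ reduces to the classical Riemannian second-variation formula for $H$ in $P$ plus two applications of the Gauss equation (once for $\Sigma\subset P$, once for $P$ in spacetime); $\vec\xi_t$ sweeps out a timelike ``cylinder'' $C$, so that $\dot m_t$ reduces to the evolution of $p(\vec e_t,\vec e_t)$ for the conjugate momentum tensor $p$ of $C$, combined with the Codazzi equation on $C$. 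The second (alternate) proof in the paper instead starts from a general variation-of-Hawking-mass formula of Bray--Hayward--Mars--Simon and simply evaluates its ingredients $U,\Theta^T,\Theta^L$ in the two directions $\vec\xi_r$ and $\vec\xi_t$.

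By contrast, you propose to compute the codimension-two first variation of $\vec H$ under the full mixed velocity $\vec\xi$ in one shot, in the rotating frame $\{\vec\nu_H,\vec\nu_H^\perp\}$, and then to use the codimension-two Gauss equation directly. This can be made to work, and is closer in spirit to the original proof in the prequel, but it forgoes the simplification the paper is advertising: the plane/cylinder decomposition turns the Lorentzian codimension-two variation into two codimension-one problems, each handled by standard hypersurface identities, and it cleanly separates the origin of the various terms (lines one through four come essentially from the plane, the $\beta$-cross terms and the $\div_\Sigma(\alpha_H)$ term from the cylinder). Your direct method buys you a single pass at the expense of heavier bookkeeping with normal-bundle curvature and frame rotation in the time direction.

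One small correction: with $\II_r=-\langle\vec\II,\vec\nu_H\rangle$ and $\vec\nu_H=-\vec H/|\vec H|$ one has $\tr_\Sigma\II_r=+H$, not $-H$; this only affects a sign in your traceless decomposition but is worth fixing before you run the algebra.
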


The  purpose of this sequel paper is twofold.  First, we give  two more proofs of the above theorem  that provide valuable additional insight (sections \ref{sec:plane_cylinder} and \ref{sec:bhms}).  One approach involves computing the variation of the Hawking mass separately in spacelike and timelike directions, while the other derives the equation from a known formula of Bray, Hayward, Mars, and Simon \cite{BHMS}.  Second, we investigate conditions on the family of surfaces that guarantee monotonicity of the Hawking mass (section \ref{sec:monotonicity}).

\vspace{2mm}
\paragraph{\emph{Acknowledgements:}} 
H.B. was supported in part by NSF grant \#DMS-1007063. M.M. acknowledges financial support under the projects  FIS2012-30926
(MICINN) and P09-FQM-4496 (Junta de Andaluc\'{\i}a and FEDER funds).

\section{The Plane / Cylinder Derivation}
\label{sec:plane_cylinder}

The problem of computing the rate of change of the Hawking mass 
\begin{equation*}
   \dot{m} = \frac{d}{ds}m_H(\Sigma(s))|_{s=0}
\end{equation*}
when flowing in the direction 
$\vec{\xi} = \vec{I} + \beta \vec{I}^\perp$ may be separated into two contributions, 
\begin{equation*}
   \vec{\xi}_r = \vec{I} \;\;\;\;\;\mbox{ and }\;\;\;\;\; \vec{\xi}_t = \beta \vec{I}^\perp
\end{equation*}
so that $\vec{\xi} = \vec{\xi}_r + \vec{\xi}_t$.  Decomposing into contributions parallel and perpendicular to the mean curvature vector (which we will sometimes call the radial and time directions, respectively) is  the main new idea that leads to the monotonicity formula in Theorem \ref{thm:main}.  The overall rate of change of the Hawking mass 
\begin{equation}
 \label{eqn_m_dot}
   \dot{m} = \dot{m}_r + \dot{m}_t
\end{equation}
will then be the sum of these two separate contributions.  

This idea is depicted in figure \ref{fig:cylinder} where for a short amount of flow time $s$ the flow in the radial direction locally sweeps out a spacelike 3-plane and the flow in the time direction locally sweeps out a (2+1)-dimensional cylinder, at least qualitatively.  The actual flows which sweep out the plane and the cylinder need only agree with $\vec{\xi}_r$ and $\vec{\xi}_t$ on the initial surface $\Sigma$ at $s=0$.  Theorem \ref{thm:main} then follows from the next two theorems and equation (\ref{eqn_m_dot}).

\begin{theorem}\label{thm:plane} (The plane theorem)
The initial rate of change of the Hawking mass of a regular family of surfaces beginning with $\Sigma$ and flowing with initial velocity $\vec{\xi}_r = \vec{I}$ is 
\begin{equation*}
\frac{\dot{m}_r}
{\sqrt{\frac{|\Sigma|}{(16\pi)^3}} } 
= 4\pi (2 - \chi(\Sigma)) 
+ \int_{\Sigma} 2 G(-\vec H^\perp, \vec I^\perp) 
+ \left[|\mathring \II_r |^2 
+ |\mathring \II_t|^2 \right] 
+ 2 \left[ \left| \frac{\nabla^\Sigma H}{H}\right|^2  +|\alpha_H|^2 \right] 
\end{equation*}
with everything defined as before in Theorem \ref{thm:main}.
\end{theorem}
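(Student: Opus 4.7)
The plan is to recognize the flow $\vec\xi_r = \vec I = \vec\nu_H/H$ as classical Riemannian inverse mean curvature flow inside a carefully chosen spacelike hypersurface of the ambient spacetime, and then apply Geroch's monotonicity formula together with Gauss--Codazzi to re-express every Riemannian quantity in spacetime terms.

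First, I would set up the ambient Riemannian geometry. Because $\vec\xi_r$ is spacelike and transverse to $\Sigma$, the short-time image of $\Sigma$ under the flow locally sweeps out a smooth spacelike hypersurface $M^3$ containing $\Sigma$. At $\Sigma$ the tangent space $TM$ is spanned by $T\Sigma$ and $\vec\nu_H$, so the future-timelike unit normal $T$ to $M$ must equal $\vec\nu_H^\perp$. A short computation shows that the trace of the second fundamental form $k$ of $M$ restricted to $\Sigma$ is $\trSigmak = -\langle \vec H, T\rangle = -\langle \vec H, \vec\nu_H^\perp\rangle = 0$. Two consequences follow at $s=0$: (i) $\vec H$ is tangent to $M$, the $M$-mean curvature $H_M$ of $\Sigma$ equals the spacetime length $H = |\vec H|$, and the Riemannian Hawking mass $m_H^M(\Sigma)$ agrees with the spacetime $m_H(\Sigma)$; (ii) the flow direction $\vec I = \vec\nu_H/H$ agrees with the Riemannian IMCF velocity $\nu/H_M$. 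Since $\dot m_r$ depends only on the initial velocity, it coincides with the Geroch rate of change of $m_H^M$ under IMCF in $M$.

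Second, I would invoke Geroch's classical formula for IMCF in a $3$-manifold, which after standard manipulations (the first variation $\partial_s H_M = -\Delta(1/H_M) - (|A_M|^2 + \Ric_M(\nu,\nu))/H_M$, integration by parts, Gauss--Bonnet on $\Sigma$, and the Gauss equation of $\Sigma \subset M$) takes the form
\begin{equation*}
\frac{\dot m_H^M}{\sqrt{|\Sigma|/(16\pi)^3}} \;=\; 4\pi\bigl(2-\chi(\Sigma)\bigr) + \int_\Sigma \left(2\frac{|\nabla^\Sigma H_M|^2}{H_M^2} + |\mathring A_M|^2 + R_M\right).
\end{equation*}
Directly from the definitions $A_M = \II_r$, so $|\mathring A_M|^2 = |\mathring \II_r|^2$, and we already know $H_M = H$ on $\Sigma$.

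The final and most delicate step is translating $R_M$ into spacetime language. I would apply the Hamiltonian constraint $R_M = 2G(T,T) + |k|^2 - (\tr_M k)^2$ together with the orthogonal decomposition $TM|_\Sigma = T\Sigma \oplus \mathbb R\,\nu$ and the identifications $\II_t(X,Y) = k(X,Y)$ for $X,Y \in T\Sigma$ and $\alpha_H(X) = -k(X,\nu)$ for $X \in T\Sigma$. A short expansion of $|k|^2 - (\tr_M k)^2$ in this basis, with the cross terms killed by $\trSigmak = 0$, collapses exactly to $|\mathring \II_t|^2 + 2|\alpha_H|^2$. Combined with the identity $G(-\vec H^\perp, \vec I^\perp) = G(T,T)$, immediate from $\vec H^\perp = -H\,T$ and $\vec I^\perp = T/H$, this yields the stated formula. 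The main obstacle I expect is purely bookkeeping: carefully tracking signs and metric-signature conventions across the Lorentzian-to-Riemannian transition, and checking that the algebraic collapse of $|k|^2 - (\tr_M k)^2$ really does produce precisely the traceless-square terms required by the theorem.
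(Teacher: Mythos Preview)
Your proposal is correct and follows essentially the same route as the paper: embed $\Sigma$ in the spacelike hypersurface $P$ swept out by the flow, identify the flow with Riemannian IMCF in $P$, use the standard Geroch computation (second variation of $H$, Gauss equation for $\Sigma\subset P$, Gauss--Bonnet, integration by parts), and then convert $R_P$ into spacetime data via the Hamiltonian constraint together with the decomposition $|k|^2-(\tr_P k)^2 = 2|\alpha_H|^2 + |\II_t|^2$ coming from $\trSigmak=0$. The only point to tighten is your sentence ``since $\dot m_r$ depends only on the initial velocity'': the reason the spacetime and Riemannian Hawking masses have the same derivative at $s=0$ is not that claim, but rather that their difference is built from $\int_\Sigma(\trSigmak)^2$, whose $s$-derivative vanishes because $\trSigmak=0$ at $s=0$ (this is exactly the cancellation the paper records when passing from $\frac{d}{ds}\langle\vec H,\vec H\rangle$ to $2H\dot H$).
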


While the above result follows from previous works such as \cite{frauendiener}, we include the proof below for clarity and completeness, as well as to establish notation.

\begin{theorem}\label{thm:cylinder} (The cylinder theorem)
The initial rate of change of the Hawking mass of a regular family of surfaces beginning with $\Sigma$ and flowing with initial velocity $\vec{\xi}_t = \beta \vec{I}^\perp$ is 
\begin{equation*}
\frac{\dot{m}_t}
{\sqrt{\frac{|\Sigma|}{(16\pi)^3}} } 
= \int_{\Sigma} 2 G(-\vec H^\perp, \beta \vec I) 
+ 2\beta \langle  \mathring \II_r, \mathring \II_t\rangle
+ 4\beta\alpha_H\left(\frac{\nabla^\Sigma H}{H}\right)  
+ 2 \beta \cdot \divm_{\Sigma}(\alpha_H)
\end{equation*}
with everything defined as before in Theorem \ref{thm:main}.
\end{theorem}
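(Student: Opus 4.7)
\emph{Proof plan.} The plan is to reduce the claim to a first-variation computation inside a timelike $3$-hypersurface containing $\Sigma$, exploiting the fact that $\vec\xi_t = \beta\vec I^\perp$ is orthogonal to $\vec H$. Indeed, $\vec I^\perp\perp\vec I$ and $\vec I\parallel\vec H$, so the first-variation formula for area gives $\dot{dA}|_{s=0}=\langle-\vec H,\vec\xi_t\rangle\, dA=0$ pointwise on $\Sigma$, and $|\Sigma(s)|$ is stationary to first order. The derivative of the Hawking mass therefore reduces to
\begin{equation*}
\dot m_t \;=\; -\sqrt{\tfrac{|\Sigma|}{16\pi}}\cdot\tfrac{1}{16\pi}\int_\Sigma \dot{\langle\vec H,\vec H\rangle}\, dA,
\end{equation*}
and, after dividing by $\sqrt{|\Sigma|/(16\pi)^3}$, the claim becomes the identity that $-\int_\Sigma \dot{\langle\vec H,\vec H\rangle}\, dA$ equals the stated right-hand side.

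Next, I would use the freedom in the extension of $\vec\xi_t$ off $\Sigma$ and choose an extension so that the flow locally sweeps out a timelike $3$-submanifold $T$ whose spacetime unit normal along $\Sigma$ is $\vec\nu_H$. Inside $T$, the surface $\Sigma$ has timelike unit normal $\vec\nu_H^\perp$ and, because $\vec H$ is normal to $T$, is a minimal spacelike hypersurface of the Lorentzian $3$-manifold $T$. The flow becomes a timelike normal flow of $\Sigma$ inside $T$ with speed $\beta/H$, and $\II_r,\II_t$ acquire concrete geometric meanings: $\II_r$ is the pullback to $\Sigma$ of the second fundamental form of $T$ in spacetime (along the spacelike normal $\vec\nu_H$), while $\II_t$ is the second fundamental form of $\Sigma$ inside $T$ (along the timelike normal $\vec\nu_H^\perp$).

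I would then compute $\dot{\langle\vec H,\vec H\rangle}=2\langle\dot{\vec H},\vec H\rangle$ by combining the standard variation formula for the mean curvature of a spacelike hypersurface under a timelike normal flow in a Lorentzian $3$-manifold with the Gauss--Codazzi--Ricci equations for $\Sigma\subset T\subset M$. I expect four families of contributions. Ambient Riemann tensor terms collapse, via the Gauss equation for $T\subset M$ and the Ricci decomposition, into $2G(-\vec H^\perp,\beta\vec I)$. Quadratic expressions in $\vec\II$ reduce to $2\beta\langle\mathring\II_r,\mathring\II_t\rangle$ after using $\tr\II_t=0$. The rotation of the normal frame $\{\vec\nu_H,\vec\nu_H^\perp\}$ along the flow couples $\nabla^\Sigma H/H$ with $\alpha_H$ to produce $4\beta\alpha_H(\nabla^\Sigma H/H)$. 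Finally, commuting the spacetime connection through the Codazzi identity for $\vec\II$ yields a tangential divergence which assembles into $2\beta\cdot\divm_\Sigma(\alpha_H)$.

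The main obstacle will be the careful bookkeeping of the $\alpha_H$-dependent contributions: two structurally distinct terms must appear, the gradient pairing $\alpha_H(\nabla^\Sigma H/H)$ and the undifferentiated divergence $\divm_\Sigma(\alpha_H)$, and the latter must be kept as written rather than integrated by parts against $\beta$, since $\beta$ is an arbitrary smooth function. A secondary sanity check is that the answer be manifestly independent of the particular extension $T$ off $\Sigma$; this is forced because the final formula involves only first-order data of $\vec\xi_t$ along $\Sigma(0)$.
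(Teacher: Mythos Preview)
Your plan is essentially the paper's own approach: both arguments observe that $\dot{dA}|_{s=0}=0$, reduce to $\int_\Sigma \dot{\langle\vec H,\vec H\rangle}$, embed $\Sigma$ in the timelike $(2+1)$-hypersurface $C$ swept out by the flow (your $T$), and extract the Einstein-tensor term from the traced Codazzi equation for $C\subset M$. The paper organizes the calculation slightly differently: rather than invoking a ``standard variation of mean curvature'' formula, it writes $\langle\vec H,\vec H\rangle = p(\vec e_t,\vec e_t)^2 - H_C^2$ with $p=(\tr_C\tilde k)g_C-\tilde k$ and $H_C=0$ on $\Sigma$, then computes $\frac{d}{ds}p(\vec e_t,\vec e_t)$ directly via the Leibniz rule on $C$. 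The terms $\alpha_H(\nabla^\Sigma f)$, $\div_\Sigma(\alpha_H)$, $\langle\mathring\II_r,\mathring\II_t\rangle$, and $G(\vec e_r,\vec e_t)$ fall out of this one computation and the identity $\div_C(p)(\vec e_t)=G(\vec e_r,\vec e_t)$.

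One small correction to your expectations: you say the $\div_\Sigma(\alpha_H)$ term ``must be kept as written rather than integrated by parts against $\beta$.'' In fact the paper's raw output is $-4\alpha_H(\nabla^\Sigma\beta)-2\beta\,\div_\Sigma(\alpha_H)$, and it \emph{does} integrate the first term by parts to obtain $+4\beta\,\div_\Sigma(\alpha_H)-2\beta\,\div_\Sigma(\alpha_H)=2\beta\,\div_\Sigma(\alpha_H)$. Since the theorem is an integral identity, this is harmless; your concern there is misplaced, but it does not affect the soundness of the plan.
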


\begin{figure}
\begin{center}
\includegraphics[width=6.5in]{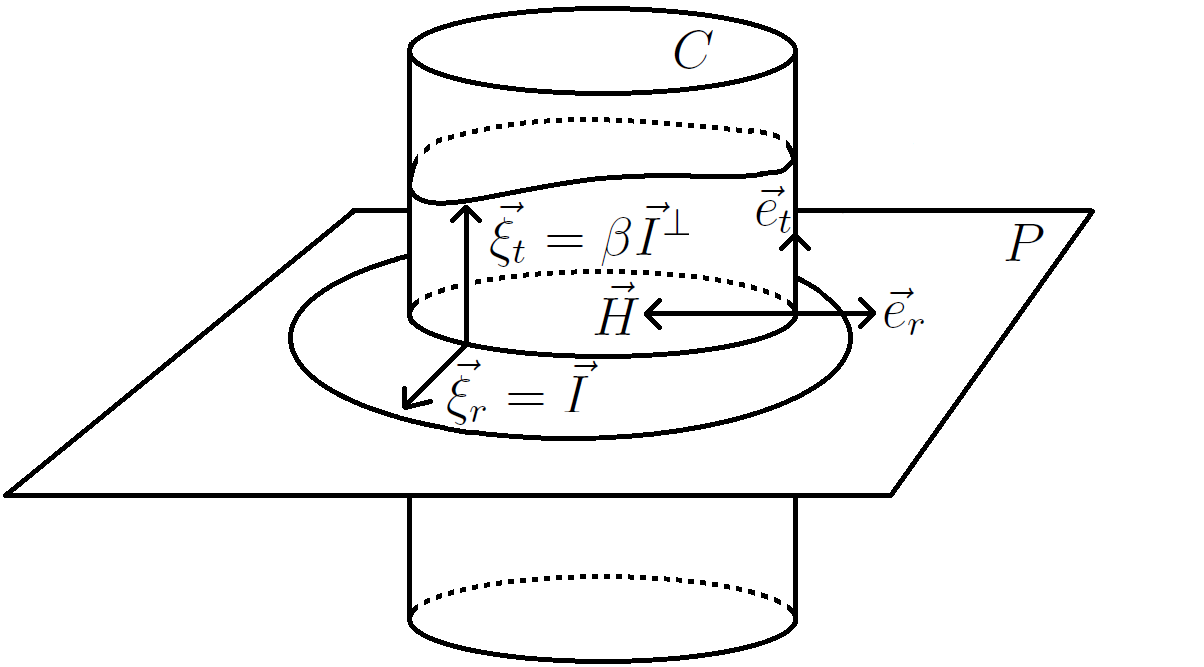}
\caption{The surface $\Sigma^2$ is at the intersection of the ``plane'' $P^3$ and the ``cylinder'' $C^{2,1}$ generated by flowing $\Sigma^2$ by $\vec{\xi}_r = \vec{I}$ and $\vec{\xi}_t = \beta \vec{I}^\perp$.
 \label{fig:cylinder}}
\end{center}
\end{figure}

\subsection{Proof of the plane theorem}

In this subsection we prove Theorem \ref{thm:plane}.  
Let $P$ be the spacelike hypersurface swept out by the $\Sigma(s)$ beginning at $\Sigma = \Sigma(0)$, and suppose the flow velocity at $s=0$ is $\vec{\xi}_r = \vec{I}$.  By the first variation formula for area, 
\begin{eqnarray}
\dot{dA} |_{s=0} = \langle -\vec{H}, \vec{\xi_r} \rangle dA |_{s=0} &=& dA \label{eqn:areaform} \\
\frac{d}{ds} |\Sigma(s)| |_{s=0} &=& |\Sigma|. \label{eqn:area}
\end{eqnarray}

Let $\vec e_t$ be the future unit normal to $P$ and $\vec e_r$ be the outward unit normal to $\Sigma(s)$ in $P$.  Note that at $s=0$, $\vec e_t = \vec\nu_H^\perp$ and $\vec e_r = \vec\nu_H$ on the initial surface $\Sigma$.
Next, let the second fundamental form of $P$ in the spacetime to be
\begin{equation*} 
   \vec\II_P(Y,Z) = \mbox{nor}_P \left( \nabla_Y Z \right) = k(Y, Z) \vec e_t
\end{equation*}
for vectors $Y$ and $Z$ tangent to $P$, where $k$ is scalar-valued.
Finally, define $H = - \langle \vec H, \vec e_r \rangle$ to be the scalar-valued mean curvature of $\Sigma(s)$ in $P$.  Since $\vec e_r = \vec\nu_H$ at $s=0$, $H = |\vec H|$ on the initial surface $\Sigma$.

Since $\vec e_t$ and $\vec e_r$ are an orthonormal basis for the normal bundle to $\Sigma(s)$, 
\begin{eqnarray*}
   \vec H = \tr_\Sigma (\vec\II) &=& 
   \langle \vec H, \vec e_r \rangle \vec e_r - \langle \tr_\Sigma (\vec\II), \vec e_t \rangle \vec e_t \\
   &=& -H \vec e_r - \langle \tr_\Sigma (\vec\II_P), \vec e_t \rangle \vec e_t \\
   &=& -H \vec e_r + \tr_\Sigma (k) \vec e_t,
\end{eqnarray*}
since $\vec \II_P - \vec \II$, restricted to
vectors tangent to $\Sigma$, 
is orthogonal to $\vec e_t$.  Here, $\tr_\Sigma(\cdot)$ is the trace with respect to $\Sigma$. Hence, 
\begin{equation*}
   \langle \vec H, \vec H \rangle = H^2 - (\tr_\Sigma (k))^2,
\end{equation*}
for all $s$, and when $s=0$,
\begin{equation}\label{eqn:zerotrace}
 \tr_\Sigma (k) = -\langle \vec H, \vec e_t \rangle = -\langle \vec H, \vec\nu_H^\perp \rangle  = 0
\end{equation}
on $\Sigma$.  Thus,
\begin{eqnarray}
   \left. \frac{d}{ds} \langle \vec H, \vec H \rangle \right|_{s=0} \nonumber
   &=& \left. 2H\left(\frac{d}{ds} H\right)   \right|_{s=0}
       - 2 \left. \tr_\Sigma (k) \left(\frac{d}{ds}\tr_\Sigma (k) \right)  \right|_{s=0} \\
   &=& \left. 2H\left(\frac{d}{ds} H\right) \right|_{s=0} .  \label{eqn:H2}
\end{eqnarray}

The initial speed of the flow is $|\vec\xi_r| = |\vec I| = \frac{1}{H}$, so
by the second variation formula 
\begin{equation}
   \left.\frac{d}{ds} H \right|_{s=0} = 
   -\Delta_\Sigma \left(\frac{1}{H}\right) 
   - |\!\II_r\!|^2 \left(\frac{1}{H}\right)
   - \Ric_P(\vec e_r, \vec e_r) \left(\frac{1}{H}\right), \label{eqn:SVF}
\end{equation}
where $\Ric_P$ is the Ricci curvature of $P$ and $\II_r = -\langle \vec \II, \vec\nu_H\rangle$ agrees with the scalar-valued second fundamental form of $\Sigma$ in $P$.  The Gauss equation traced twice over $\Sigma$ inside $P$ is
\begin{equation}
   2\Ric_P(\vec e_r, \vec e_r) = R - 2K + H^2 - |\!\II_r\!|^2, \label{eqn:G1}
\end{equation}
where $R$ is the scalar curvature of $P$ and $K$ is the Gauss curvature of $\Sigma$.
The Gauss equation traced twice over $P$ inside the spacetime is 
\begin{equation}
   R = 2 G(\vec e_t, \vec e_t) + |k|_P^2 - (\tr_P \!k)^2, \label{eqn:G2}
\end{equation}
where $G$ is the Einstein curvature tensor of the spacetime and $|\cdot|_P$ and $\tr_P(\cdot)$ are the tensor norm and trace with respect to $P$.

By choosing an orthonormal basis for $TP$ along $\Sigma$ that includes $\vec \nu_H$, we conclude that 
\begin{eqnarray*}
   |k|_P^2 &=& k(\vec \nu_H, \vec \nu_H)^2 + 2 |k(\vec \nu_H, \cdot|_\Sigma) |^2 
                      + |k|_\Sigma^2 \\
   (\tr_P k)^2 &=& (k(\vec \nu_H, \vec \nu_H) + \tr_\Sigma k)^2 \\
                   &=& k(\vec \nu_H, \vec \nu_H)^2
\end{eqnarray*}
by equation (\ref{eqn:zerotrace}), where `` $\cdot|_\Sigma$'' indicates restricting the domain to vectors tangent to $\Sigma$
and  $|\cdot|_\Sigma $ is the tensor norm computed using only directions tangent to $\Sigma$. 
Then: 
\begin{eqnarray}
   |k|_P^2 - (\tr_P k)^2 &=& 2 |k(\vec \nu_H, \cdot|_\Sigma) |^2 + |k|_\Sigma^2 \nonumber\\
                                 &=& 2 |\alpha_H|^2 + |\!\II_t\!|^2  \label{eqn:k2}
\end{eqnarray}
since $\alpha_H(X) = \langle \nabla_X \vec\nu_H, \vec\nu_H^\perp \rangle
                          = \langle \vec\II_P(X, \vec\nu_H), \vec e_t\rangle 
                          = - k(\vec \nu_H, X)$, and where
$\II_t = -\langle \vec \II, \vec\nu_H^\perp\rangle$.

We may now compute the initial rate of change of the Hawking mass in Definition \ref{def:Hawking} using equations (\ref{eqn:areaform})--(\ref{eqn:area}) and  (\ref{eqn:H2})--(\ref{eqn:k2}) to obtain
\begin{equation*}
\frac{\dot{m}_r}
{\sqrt{\frac{|\Sigma|}{(16\pi)^3}} } 
= 8\pi 
+ \int_{\Sigma} -2K 
+ 2 G(\vec e_t,\vec e_t) 
+ \left(\!|\II_r\!|^2 - \frac12 H^2\right)
+ |\!\II_t\!|^2
+ 2H \Delta_\Sigma\left(\frac{1}{H}\right)
+2|\alpha_H|^2.
\end{equation*}
The following observations complete the proof of the plane theorem.
\begin{itemize}
\item By the Gauss-Bonnet theorem, $\int_{\Sigma} K = 2\pi \chi(\Sigma)$.  
\item By tensorality, 
$G(\vec e_t, \vec e_t) = G(H\vec e_t, \frac{1}{H}\vec e_t) = G(-\vec H^\perp, \vec I^\perp)$. 
\item Since the trace of $\II_r$ is $H$, its traceless part $\mathring\II_r $ equals  $\II_r - \frac12 H g|_\Sigma$, where $g|_{\Sigma}$
is the restriction of the metric to $T\Sigma$.  Thus:
\begin{eqnarray*}
   |\mathring \II_r |^2 &=& \left\langle \II_r - \frac12 H g|_\Sigma, \II_r - \frac12 H  g|_\Sigma \right\rangle \\
   &=& |\!\II_r\! |^2 -2 \cdot \frac12 H^2 + \frac14 H^2 \cdot 2 \\
   &=& |\!\II_r\! |^2 - \frac12 H^2.
\end{eqnarray*}
\item By equation \ref{eqn:zerotrace}, $\II_t$ is traceless, so $|\mathring\II_t|^2 = |\!\II_t\!|^2$.
\item Finally, integrating by parts implies  
\begin{equation*}
\int_{\Sigma} H \Delta_\Sigma\left(\frac{1}{H}\right) =
\int_{\Sigma} - \left\langle \nabla^\Sigma H, \nabla^\Sigma \left(\frac{1}{H}\right) \right\rangle =
\int_{\Sigma} \left| \frac{\nabla^\Sigma H}{H}\right|^2.
\end{equation*}
\end{itemize}

\subsection{Proof of the cylinder theorem}

In this subsection we prove Theorem \ref{thm:cylinder}.
Let $C$ be the $(2+1)$-dimensional hypersurface swept out by the timelike flow $\Sigma(s)$ beginning at $\Sigma = \Sigma(0)$ whose flow velocity at $s=0$ is  $\vec\xi_t = \beta \vec I^\perp$.  

By the first variation formula for area, 
\begin{eqnarray}
\dot{dA} |_{s=0} = \langle -\vec{H}, \vec{\xi_t} \rangle dA |_{s=0} &=& 0 \label{eqn:areaform2} \\
\frac{d}{ds} |\Sigma(s)| |_{s=0} &=& 0, \label{eqn:area2}
\end{eqnarray}
which is why $C$ is depicted as a cylinder in figure \ref{fig:cylinder}.

Next we need to adapt our definitions from the previous subsection to be defined on $C$.  Quantities defined on both $C$ and $P$ need only agree where they intersect, namely on the surface $\Sigma$.  
  
Let $\vec e_r$ be the outward unit normal to $C$ and $\vec e_t$ be the future unit normal to $\Sigma(s)$ in $C$.  Note that at $s=0$, $\vec e_r = \vec\nu_H$ and $\vec e_t = \vec\nu_H^\perp$ on $\Sigma$, as before.
Define the second fundamental form of $C$ in the spacetime to be 
\begin{equation*} 
   \vec\II_C(Y,Z) = \mbox{nor}_C \left( \nabla_Y Z \right) = \tilde k(Y, Z) \vec e_r
\end{equation*}
for vectors $Y$ and $Z$ tangent to $C$, where $\tilde k$ is scalar-valued.
Let $p = (\tr_C \! \tilde k) g_C - \tilde k$ on $C$, where $\tr_C$ is the trace with respect to the induced metric $g_C$ on $C$ of signature $(-,+,+)$.

Note that on $C$,
\begin{eqnarray}
p(\vec e_t, \vec e_t) &=&  \left(\tr_{\Sigma(s)}(\tilde k) -\tilde k(\vec e_t, \vec e_t)\right) (-1) - \tilde k(\vec e_t, \vec e_t) \\ &=& -\tr_{\Sigma(s)}(\tilde k), \nonumber 
\end{eqnarray}
for all $s$, and at $s=0$:
\begin{eqnarray}
p(\vec e_t, X) &=& -\tilde k(\vec e_t, X) = -\langle \vec\II_C(\vec e_t, X), \vec e_r \rangle
                   = -\langle \nabla_X \vec e_t, \vec e_r  \rangle 
                   = \langle \nabla_X \vec e_r, \vec e_t  \rangle \label{eqn:palpha} \\
&=& \alpha_H(X) \nonumber
\end{eqnarray}
where $X$ is any tangent vector of $\Sigma$.  Recall from Definition \ref{def:alpha} that $\alpha_H$ is the connection 1-form with respect to $\vec\nu_H$ for the normal bundle of $\Sigma(s)$.

\begin{definition}
Define $H = - \langle \vec H, \vec e_r \rangle$ as before and define $H_C = - \langle \vec H, \vec e_t \rangle$ to be the scalar-valued mean curvature of $\Sigma(s)$ in $C$.  
Note that $H = |\vec H|$ and $H_C = 0$ on $\Sigma$.
\end{definition}

Since $\vec e_t$ and $\vec e_r$ are an orthonormal basis for the normal bundle to $\Sigma(s)$, 
\begin{eqnarray*}
   \vec H = \tr_{\Sigma(s)} (\vec\II) &=& 
   \langle \tr_{\Sigma(s)} (\vec\II), \vec e_r \rangle \vec e_r - \langle \vec H , \vec e_t \rangle \vec e_t \\
   &=& \langle \tr_{\Sigma(s)} (\vec\II_C), \vec e_r \rangle \vec e_r + H_C \vec e_t\\
   &=& \tr_{\Sigma(s)}(\tilde k) \vec e_r + H_C \vec e_t \\
   &=& -p(\vec e_t, \vec e_t) \vec e_r + H_C \vec e_t,
\end{eqnarray*}
since $\vec \II_C - \vec \II$, restricted to
vectors tangent to $\Sigma(s)$,
 is orthogonal to $\vec e_r$. Hence, for all $s$,
\begin{equation}\label{eqn:zerotrace2}
  p(\vec e_t, \vec e_t) = -\langle \vec H, \vec e_r \rangle = H 
\end{equation}
and 
\begin{equation*}
   \langle \vec H, \vec H \rangle = p(\vec e_t, \vec e_t)^2 - H_C^2.
\end{equation*}
Thus,
\begin{eqnarray}
   \left.\frac{d}{ds} \langle \vec H, \vec H \rangle \right|_{s=0} \nonumber
   &=& 2p(\vec e_t, \vec e_t)\left.\left(\frac{d}{ds} p(\vec e_t, \vec e_t)\right) \right|_{s=0}
       - \left.2 H_C \left(\frac{d}{ds} H_C \right)  \right|_{s=0} \\
   &=& \left.2H\left(\frac{d}{ds} p(\vec e_t, \vec e_t)\right) \right|_{s=0} .  \label{eqn:H22}
\end{eqnarray}
The initial speed of the flow is 
\begin{equation}\label{eqn:f}
f = \frac{\beta}{H}.
\end{equation}
Hence, the flow at $s=0$ is $\vec\xi_t = \beta \vec I^\perp = f\vec e_t$, so
\begin{eqnarray} 
\left.\frac{d}{ds} p(\vec e_t, \vec e_t) \right|_{s=0} 
&=& \left.f (\nabla^C_{\vec e_t} p)(\vec e_t, \vec e_t)\right|_{s=0} + \left.2p(\vec e_t, \nabla^C_{f\vec e_t} (\vec e_t))\right|_{s=0} \nonumber \\ \nonumber
&=& \left.f (\nabla^C_{\vec e_t} p)(\vec e_t, \vec e_t)\right|_{s=0} + \left.2p(\vec e_t, \nabla^\Sigma f)\right|_{s=0} \\
&=& \left.f (\nabla^C_{\vec e_t} p)(\vec e_t, \vec e_t)\right|_{s=0} + 2\alpha_H(\nabla^\Sigma f) \label{eqn:pdot}
\end{eqnarray}
by equation (\ref{eqn:palpha}), where $\nabla^C$ is the Levi-Civita connection on $C$. We justify next the 
expression $\nabla^C_{f \vec e_t} (\vec e_t) = \nabla^{\Sigma} f$
used in the second line. On open sets where $f=0$
this is obvious. At points where $f \neq 0$, 
let $s$ be the function on $C \setminus \{ f=0 \}$
taking the constant value $s$ on $\Sigma(s)$.
Since  $s$ is the flow parameter, we have $\vec\xi_t(s) = 1$ 
and hence $\nabla^C s = - f^{-1} \vec{e}_t$  because $\nabla^C s$ is parallel to $\vec{e}_t$ and
$1 = \vec{\xi}_t (s) = \langle \vec{\xi}_t, \nabla^C s \rangle = f \langle \vec{e}_t, \nabla^C s \rangle $.
Thus,  $|\nabla^C s|^2 = - f^{-2}$ and 
\begin{align*}
\nabla^C_{\vec e_t} \vec e_t & =
\nabla_{f \nabla^C s} (f \nabla^C s) = 
f (\nabla_{\nabla^C s} f) \nabla^C s + f^2 \nabla^C_{\nabla^C s} (\nabla^C s)\\ 
&= 
f (\nabla^C_{\nabla^C s} f) \nabla^C s + \frac{1}{2} f^2 \nabla^C ( |\nabla^C s|^2 ) \\
& =
f (\nabla^C_{\nabla^C s} f) \nabla^C s + f^{-1}  \nabla^C f.
\end{align*}
Since $\nabla^C_{\vec e_t} \vec e_t$ is tangent to $\Sigma(s)$, the claim follows.

The purpose of the next calculations is to find another expression for $f (\nabla^C_{\vec e_t} p)(\vec e_t, \vec e_t)$ on $\Sigma$. If we let $\{\vec e_1, \vec e_2\}$ be a local orthonormal frame for $T\Sigma$, 
\begin{align*}
\left(\div_C  (p)(\vec e_t) + (\nabla^C_{\vec e_t} p)(\vec e_t, \vec e_t)\right) - \div_{\Sigma}(\alpha_H)
&= \sum_{i=1,2} (\nabla^C_{\vec e_i} p)(\vec e_i, \vec e_t) - \sum_{i=1,2} ( \nabla^\Sigma_{\vec e_i} \alpha_H)(\vec e_i) \\
&= \sum_{i=1,2}\vec e_i(p(\vec e_i, \vec e_t)) -p(\nabla^C_{\vec e_i} \vec e_i, \vec e_t) - p(\vec e_i, \nabla^C_{\vec e_i} \vec e_t)\\
&\qquad - \sum_{i=1,2} \vec e_i (\alpha_H(\vec e_i)) - \alpha_H(\nabla^\Sigma_{\vec e_i} \vec e_i) \\
&= \sum_{i=1,2} -p(\nabla^C_{\vec e_i} \vec e_t, \vec e_i),
\end{align*}
by (\ref{eqn:palpha}) and since
$$\sum_{i=1,2} \nabla^C_{\vec e_i} \vec e_i - \nabla^\Sigma_{\vec e_i} \vec e_i = H_C \vec e_t = 0$$
 on $\Sigma$.
Substituting
\begin{equation*}
   \nabla^C_{\vec e_i} \vec e_t = \sum_{j=1,2} \langle \nabla^C_{\vec e_i} \vec e_t, \vec e_j \rangle \vec e_j
   = -\sum_{j=1,2} \langle \vec e_t, \nabla^C_{\vec e_i} \vec e_j \rangle \vec e_j
   = -\sum_{j=1,2} \langle \vec e_t, \vec\II(\vec e_i, \vec e_j) \rangle \vec e_j
   =  \sum_{j=1,2} \II_t(\vec e_i, \vec e_j) \vec e_j
\end{equation*}
we get 
\begin{equation*}
   \sum_{i=1,2} p(\nabla^C_{\vec e_i} \vec e_t, \vec e_i) = 
   \sum_{i,j=1,2} \II_t(\vec e_i, \vec e_j) p( \vec e_j, \vec e_i) = 
   \langle \II_t, p \rangle_\Sigma
\end{equation*}
Since 
$\tr_\Sigma( \II_t) = - \langle \tr_\Sigma(\vec \II), \vec\nu_H^\perp \rangle 
                        = -\langle \vec H, \vec\nu_H^\perp \rangle = 0$
on $\Sigma$, $\II_t$ is traceless and 
\begin{equation*}
\langle \II_t, p\rangle_\Sigma = \langle \mathring \II_t, p\rangle_\Sigma
= - \langle \mathring \II_t, \tilde k\rangle_\Sigma
=   \langle \mathring \II_t, \II_r\rangle
=   \langle  \mathring \II_t, \mathring \II_r\rangle,
\end{equation*}
since on $T\Sigma$, $\tilde k = \langle \vec \II_C, \vec e_r\rangle = \langle \vec \II, \vec e_r\rangle = -\II_r.$
Putting the previous four lines together, we have that 
\begin{equation}\label{eqn:divs} 
   (\nabla_{\vec e_t} p)(\vec e_t, \vec e_t) = 
   \div_{\Sigma}(\alpha_H) - \div_C  (p)(\vec e_t)
   - \langle \mathring \II_t, \mathring \II_r\rangle.
\end{equation}
Finally, the Codazzi equation traced over $C$ implies that 
\begin{equation}\label{eqn:Codazzi}
   \div_C  (p)(\vec e_t) = \Ric(\vec e_t, \vec e_r) = G(\vec e_r, \vec e_t),
\end{equation}
where $\Ric$ is the Ricci curvature and $G$ is the Einstein curvature of the spacetime.
We may now compute the initial rate of change of the Hawking mass in Definition \ref{def:Hawking} using equations (\ref{eqn:areaform2})--(\ref{eqn:area2}), (\ref{eqn:H22})--(\ref{eqn:Codazzi}) to get
\begin{eqnarray*}
\frac{\dot{m}_t}
{\sqrt{\frac{|\Sigma|}{(16\pi)^3}} } 
&=& -\int_{\Sigma} 2H\left\{2\alpha_H\left(\nabla^\Sigma \frac{\beta}{H}\right)  
+ \frac{\beta}{H} \left[ \div_{\Sigma}(\alpha_H) 
- \langle \mathring \II_t, \mathring\II_r\rangle 
- G(\vec e_t, \vec e_r) \right] \right\} \\
&=& \int_{\Sigma} -4\alpha_H(\nabla^\Sigma \beta) + 4\beta \alpha_H\left(\frac{\nabla^\Sigma H}{H}\right) 
       - 2\beta \left[ \div_{\Sigma}(\alpha_H) - \langle \mathring \II_t, \mathring \II_r\rangle 
- G(\vec e_t, \vec e_r) \right]  \\
&=& \int_{\Sigma} 2 \beta \cdot \div_{\Sigma}(\alpha_H)
+ 4\beta\alpha_H\left(\frac{\nabla^\Sigma H}{H}\right)  
+ 2\beta \langle  \mathring \II_t, \mathring \II_r\rangle
+ 2 G(-\vec H^\perp, \beta \vec I),
\end{eqnarray*}
having integrated by parts at the last step and used $G(\vec e_t, \vec e_r) = G(H \vec e_t, \frac1H\vec e_r) = G(-\vec H^\perp, \vec I)$.  This proves the cylinder theorem.
The main result, Theorem \ref{thm:main}, then follows immediately from the plane theorem (Theorem \ref{thm:plane}) and the cylinder theorem (Theorem \ref{thm:cylinder}).

\section{Alternate proof of the variation of the Hawking mass formula}
\label{sec:bhms}
In this section we provide an alternate proof of the main formula, Theorem \ref{thm:main}.  The key is a prior result of  Bray, Hayward, Mars, and Simon 
on the variation of the Hawking mass in an arbitrary flow direction $\vec \xi$ \cite{BHMS}.  
Without loss of generality, we may compute the derivative of the Hawking mass at flow time $s=0$.

\begin{theorem}[cf. Lemma 4 of \cite{BHMS}] 
\label{thm:BHMS}
Let $\Sigma(s)$ be a uniformly area expanding family of surfaces with velocity $\vec \xi$ at $s=0$.   Let $\Sigma_0=\Sigma(0)$, and assume the $\Sigma(s)$ are topologically spherical.  Then:
\begin{align*}
\frac{\frac{d }{ds} \left(m_H(\Sigma(s)) \right)}
{\sqrt{\frac{|\Sigma(s)|}{(16\pi)^3}} }\Bigg|_{s=0} &= \int_{\Sigma_0} \Big[2G(-\vec H^\perp, \vec \xi^\perp) + 16\pi\Theta^T + 16\pi \Theta^L - 2\divm_\Sigma(U)\langle \vec \xi, -\vec H^\perp\rangle\Big].
\end{align*}
\end{theorem}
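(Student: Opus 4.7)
The plan is to derive Theorem \ref{thm:BHMS} essentially as a transcription of Lemma 4 of \cite{BHMS} into the notation of the present paper, using the uniformly area expanding and topological-sphere hypotheses to package the result in the stated form. Since the substantive computation already appears in \cite{BHMS}, my job is mostly to (a) differentiate the Hawking mass in a way that exposes the right building blocks, (b) quote the BHMS identity for each of those building blocks, and (c) match conventions between the two papers.

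First I would differentiate $m_H(\Sigma(s))$ by Leibniz on the factorization
\begin{equation*}
m_H(\Sigma(s)) = \sqrt{\tfrac{|\Sigma(s)|}{16\pi}}\left(1 - \tfrac{1}{16\pi}\int_{\Sigma(s)} \langle \vec H, \vec H\rangle\, dA\right).
\end{equation*}
The uniformly area expanding assumption gives $\dot{dA} = dA$ and hence $\tfrac{d}{ds}\sqrt{|\Sigma(s)|/(16\pi)} = \tfrac{1}{2}\sqrt{|\Sigma(s)|/(16\pi)}$, while the bracket picks up two terms: the $1$ picks up the area-growth factor, and the $\int \langle \vec H, \vec H\rangle$ integral needs the BHMS variation formula. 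The normalization $\sqrt{|\Sigma(s)|/(16\pi)^3}$ appearing in the theorem is precisely what results after dividing through, and the Gauss--Bonnet value $\int_{\Sigma_0} K\, dA = 4\pi$ on a topological sphere is what absorbs the constant contribution coming from the area-growth piece into the curvature expression produced by Lemma 4. This is exactly where the topologically spherical hypothesis is used.

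Second I would invoke Lemma 4 of \cite{BHMS} to evaluate $\tfrac{d}{ds}\int_{\Sigma(s)} \langle \vec H, \vec H \rangle\, dA$ along the flow $\vec\xi$. That lemma expresses the derivative as a sum of: an Einstein-tensor contraction of the form $G(-\vec H^\perp, \vec\xi^\perp)$ arising from the twice-traced Gauss--Codazzi equations on the two-parameter family swept out near $\Sigma_0$; ``transverse'' and ``longitudinal'' curvature densities $\Theta^T$ and $\Theta^L$ that repackage the traceless second-fundamental-form and connection-1-form squared terms; and a divergence term $\divm_\Sigma(U)$ paired with the normal speed $\langle \vec\xi, -\vec H^\perp\rangle$ produced by integration by parts of the connection derivatives. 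Substituting into the Leibniz expansion and multiplying through by the universal factor completes the identity.

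The main obstacle is a bookkeeping one: the orientation conventions in \cite{BHMS} for the Hodge-type map $\vec v \mapsto \vec v^\perp$, the sign conventions for $\vec\II$ and for the mean curvature, and the normalization of the flow parameter all need to be reconciled with the conventions set up earlier in this excerpt. Once the translation dictionary is fixed, every term on the right-hand side corresponds to a named ingredient of Lemma 4 and the formula drops out. No new geometric computation is required beyond what is already in \cite{BHMS}; the content here is the careful identification of the uniformly area expanding and spherical hypotheses as exactly what is needed to reduce the general BHMS formula to the compact four-term expression stated.
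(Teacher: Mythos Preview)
Your proposal is correct and matches the paper's treatment: the paper does not supply an independent proof of Theorem~\ref{thm:BHMS} but simply cites it as Lemma~4 of \cite{BHMS}, accompanied by a remark recording exactly the three bookkeeping adjustments you identify (dropping the cosmological constant, flipping the sign conventions on $\vec\II$ and $\vec H$, and using the uniformly area expanding hypothesis to simplify). Your outline of differentiating $m_H$ via Leibniz, absorbing the area-growth constant through Gauss--Bonnet on a sphere, and translating the BHMS identity term by term is precisely the content behind that remark.
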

The quantities $\Theta^T, \Theta^L,$ and $U$ are explained below.  We restrict to the case of topological spheres for simplicity and to be consistent with \cite{BHMS}.

\begin{remark}
The above formula is presented in a slightly different form than in \cite{BHMS}.  First, our definition of Hawking mass 
does not include a cosmological constant term
(cf. formula (4) in \cite{BHMS}).
Second, the sign convention for the second fundamental form and mean curvature vector in \cite{BHMS} are opposite those of the present paper and have been modified accordingly.
Third, the version of the formula presented above is simpler than that in \cite{BHMS} because we restrict to uniformly area expanding flows.
\end{remark}

We now find expressions for $U$, $\Theta^T$, and $\Theta^L$ separately in the cases $\vec \xi = \vec \xi_r = \vec I$ and $\vec \xi = \vec \xi_t = \beta \vec I^\perp.$

\vspace{2mm}
\paragraph{\emph{Computing $U$:}}
Let $\{ \vec l, \vec k \}$ be a positively oriented null basis of $T^\perp \Sigma_0$, so that
$$\vec \xi = A \vec l + B \vec k$$
for some functions $A,B$ on $\Sigma_0$.  When $\vec \xi$ is non-null,
$A$ and $B$ never vanish.
Define $\phi = -\langle \vec l, \vec k\rangle$, and
define $U$ to be the following 1-form on $\Sigma_0$:
$$U(X) = \frac{1}{2\phi}\left(\frac{\langle \vec l, \nabla_X(B\vec k) \rangle}{B} - \frac{\langle \vec k, \nabla_X(A\vec l)}{A}\right).$$
It is immediate to check that $U$ is independent of the choice of 
$\{\vec{l},\vec{k} \}$. 
Elementary calculations show
$$U(X) = \frac{1}{2} \left(\frac{D_X(A)}{A} - \frac{D_X(B)}{B}\right) + \frac{1}{2\phi}\left(\langle \nabla_X\vec k,\vec l\rangle - \langle\nabla_X \vec l, \vec k \rangle\right).$$
Let us fix the null frame
$$\vec l = \vec \nu_H + \vec \nu_H^\perp, \qquad\qquad \vec k = -\vec \nu_H+ \vec \nu_H^\perp.$$
Then $\phi=2$ and
$$\langle \nabla_X\vec k,\vec l\rangle = -\langle \nabla_X\vec l,\vec k\rangle =-2\alpha_H(X),$$
so that
$$U(X) = \frac{1}{2} \left(\frac{D_X(A)}{A} - \frac{D_X(B)}{B}\right) - \alpha_H(X).$$
First, suppose $\vec \xi = \vec \xi_r = \vec I$. Then 
$$\vec \xi = \frac{1}{H}\vec\nu_H = \frac{1}{2H} (\vec l - \vec k),$$ and
we have $A=-B$, which implies:
\begin{equation}
\label{eqn_U1}
U=-\alpha_H.
\end{equation}
Next, suppose $\vec \xi = \vec \xi_t = \beta \vec I^\perp$.
In the formula in Theorem \ref{thm:BHMS}, the term involving $U$ is multiplied by $\langle \vec \xi, -\vec H^\perp\rangle=-\beta$, so it suffices to define $U$ only on the set $\{ \beta \neq 0 \}$, where $\vec \xi$ is not null.
Now  $$\vec \xi = \beta \vec I^\perp = \frac{\beta}{H}\vec\nu_H^\perp = \frac{\beta}{2H} (\vec l + \vec k),$$
so that $A=B$ and, as before,
\begin{equation}
\label{eqn_U2}
U=-\alpha_H.
\end{equation}

\vspace{2mm}
\paragraph{\emph{Computing $\Theta^T$}}

Let $\vec \II^\circ$ be the trace-free part of the second fundamental form of $\Sigma_0$ in the spacetime.
Then $\Theta^T$ is defined by equations (14) of \cite{BHMS}:

\begin{align}
8\pi \Theta^T &=\langle -\vec \II^\circ_{ab}, -\vec H\rangle \langle \vec \xi, - (\vec \II^\circ)^{ab}\rangle - \frac{1}{2} \langle -\vec \II^\circ_{ab}, -(\vec \II^\circ)^{ab}\rangle \langle \vec \xi, -\vec H\rangle \label{eqn_14a}\\
&=\langle -\vec \II^\circ_{ab}, -\vec H^\perp\rangle \langle \vec \xi^\perp, - (\vec \II^\circ)^{ab}\rangle - \frac{1}{2} \langle -\vec \II^\circ_{ab}, -(\vec \II^\circ)^{ab}\rangle \langle \vec \xi^\perp, -\vec H^\perp\rangle, \label{eqn_14b}
\end{align}
where $a$ and $b$ are indices corresponding to a local orthonormal frame on $\Sigma$.

First, consider the case in which $\vec \xi = \vec I$.  We add (\ref{eqn_14a}) and (\ref{eqn_14b}), noting the two terms on the right cancel by the definition of $\;^\perp$:
\begin{align}
16\pi \Theta^T &= \langle \vec \II^\circ_{ab}, -\vec H\rangle \langle \vec \xi,  (\vec \II^\circ)^{ab}\rangle + \langle\vec\II^\circ_{ab}, -\vec H^\perp\rangle\langle\vec \xi^\perp, (\vec\II^\circ)^{ab}\rangle\nonumber\\
&= \langle \vec \II^\circ_{ab}, \vec \nu_H\rangle\langle (\vec \II^\circ)^{ab},\vec \nu_H\rangle + \langle \vec \II^\circ_{ab}, \vec \nu_H^\perp\rangle\langle (\vec \II^\circ)^{ab},\vec \nu_H^\perp\rangle \nonumber\\
&= |\mathring\II_{r}|^2+|\mathring\II_{t}|^2, \label{eqn_Theta_T1}
\end{align}
by definition of $\mathring\II_r$ and $\mathring\II_t$.

Second, suppose $\vec \xi = \beta \vec I^\perp$.  Beginning with (\ref{eqn_14a}), we have
\begin{align}
16\pi \Theta^T &=2\langle \vec \II^\circ_{ab}, -\vec H\rangle \langle \vec \xi,  (\vec \II^\circ)^{ab}\rangle\nonumber\\
&=2\beta \langle\vec \II^\circ_{ab}, \vec \nu_H\rangle \langle   (\vec \II^\circ)^{ab},\vec \nu_H^\perp\rangle\nonumber\\
&=2\beta \langle \mathring\II_{r}, \mathring\II_{t}\rangle.\label{eqn_Theta_T2}
\end{align}

\vspace{2mm}
\paragraph{\emph{Computing $\Theta^L$}} 
For non-null  $\vec \xi$, the expression $\Theta^L$ is defined by equation (23) of \cite{BHMS}:
\begin{equation}
8 \pi \Theta^L = (|U|^2 + | d \psi |^2  ) 
\langle \vec \xi, -\vec H \rangle
 -  2 \langle U, d \psi \rangle \langle \vec \xi, - \vec{H}^{\perp} \rangle 
\label{defThetaL}
\end{equation}
where
$$ e^{2 \psi } = | \langle \vec \xi, \vec \xi \rangle |.$$
First, in the case $\vec \xi=\vec I$, $\vec \xi$ is spacelike and
\begin{equation}
\label{eqn_psi}
e^{2 \psi} = \langle \vec \xi, \vec \xi\rangle = \frac{1}{H^2},
\end{equation}
so that $\Theta^L$ becomes:
\begin{align}
8\pi \Theta^L &=|\alpha_H|^2 + \frac{|\nabla^\Sigma H|^2}{H^2}, \label{eqn_Theta_L1}
\end{align}
having used (\ref{eqn_U1}) and 
$\langle \vec \xi, \vec H^{\perp} \rangle =0$.
In the case $\vec \xi=\beta \vec I^\perp$
$$
e^{2\psi}=  - \langle \vec \xi, \vec \xi\rangle = \frac{\beta^2}{H^2}$$
on the set where $\beta \neq 0$.  In particular, 
$$\psi = \log \beta - \log H.$$
The definition (\ref{defThetaL}) of  $\Theta^L$ becomes:
\begin{align}
8\pi \Theta^L &= -2\langle U, d\psi\rangle\langle \vec \xi, -\vec H^\perp\rangle 
\nonumber\\
&= 2 \beta \alpha_H \left(\frac{\nabla^\Sigma H}{H} - \frac{\nabla^\Sigma \beta}{\beta}\right) \nonumber\\
&= 2 \beta \alpha_H \left(\frac{\nabla^\Sigma H}{H}\right) - 2\alpha_H\left(\nabla^\Sigma \beta\right).\label{eqn_Theta_L2}
\end{align}
Note that (\ref{eqn_Theta_L2}) can be treated as the definition of $\Theta^L$, regardless of whether $\beta$ vanishes.

\begin{proof}[Alternate proof of Theorem \ref{thm:main}]
We combine the above computations with Theorem \ref{thm:BHMS}.  Let $Dm_H(\vec \xi)$ denote the derivative of the Hawking mass in the direction $\vec \xi$, evaluated on $\Sigma_0$. 

In the case $\vec \xi = \vec \xi_r = \vec I$, we use (\ref{eqn_Theta_T1}), (\ref{eqn_Theta_L1}), and $\langle \vec \xi,-\vec H^\perp\rangle=0$ to obtain:
\begin{equation*}
Dm_H(\vec \xi_r) = \sqrt{\frac{|\Sigma_0|}{(16\pi)^3}} \int_{\Sigma_0} \left[2G(-\vec H^\perp, \vec \xi_r^\perp)
+|\mathring\II_r|^2+|\mathring\II_t|^2 + 2|\alpha_H|^2 + \frac{2|\nabla^\Sigma H|^2}{H^2}\right].
\end{equation*}

In the case $\vec \xi = \vec \xi_t = \beta\vec I^\perp$, we use (\ref{eqn_U2}),(\ref{eqn_Theta_T2}), (\ref{eqn_Theta_L2}), and $\langle \vec \xi,-\vec H^\perp\rangle=-\beta$ to obtain:
\begin{align*}
Dm_H(\vec \xi_t) &= \sqrt{\frac{|\Sigma_0|}{(16\pi)^3}} \Bigg\{\int_{\Sigma_0} \left[2G(-\vec H^\perp, \vec \xi_t^\perp) + 2\beta \langle \mathring\II_{r}, \mathring\II_{t}\rangle
+4 \beta \alpha_H \left(\frac{\nabla^\Sigma H}{H}\right)\right] \\
&\qquad - \int_{\Sigma_0}
 \left[ 4\alpha_H\left(\nabla^\Sigma \beta\right) +2\beta \div_\Sigma(\alpha_H)\right] \Bigg\}\\
 &= \sqrt{\frac{|\Sigma_0|}{(16\pi)^3}} \Bigg\{\int_{\Sigma_0} \left[2G(-\vec H^\perp, \vec \xi_t^\perp) + 2\beta \langle \mathring\II_{r}, \mathring\II_{t}\rangle
+4 \beta \alpha_H \left(\frac{\nabla^\Sigma H}{H}\right)\right] \\
&\qquad + \int_{\Sigma_0}
 2\beta \div_\Sigma(\alpha_H) \Bigg\},
\end{align*}
having integrated by parts on the last line.
The formula now follows by adding $Dm_H(\vec \xi_r)$ and $Dm_H(\vec \xi_t)$ and using the linearity of $Dm_H(\cdot)$ and $G(-\vec H, \cdot)$.
\end{proof}

\section{Sufficient conditions for monotonicity}
\label{sec:monotonicity}

In this section we analyze conditions of the type $\div_{\Sigma} X =0$ that ensure
monotonicity of the Hawking mass for a uniformly area
expanding family of surfaces.  We start with the following lemma.

\begin{lemma}
\label{Sufficient}
Let $(\Sigma,h)$ be a closed Riemannian manifold. Let $\Psi,
\beta$ be scalar functions on $\Sigma$ with $\Psi >0$ and $|\beta| < 1$ 
and $X$  a vector field on $\Sigma$. If
\begin{eqnarray}
\divm_{\Sigma} ( \left ( G \circ \beta\right ) 
\left ( X - (V \circ \beta) \nabla^{\Sigma} \beta) \right ) =0 
\label{cond1}
\end{eqnarray}
where $G \in C^{\infty} ((-1,1), \mathbb{R}^+)$ 
and $V \in C^{\infty} ((-1,1),\mathbb{R})$
satisfies
\begin{eqnarray}
V(x) \left ( V(x) ( 1 - x^2 ) - 1 \right ) \geq 0 , \quad \quad x \in (-1,1),
\label{cond2}
\end{eqnarray}
then
\begin{eqnarray*}
\int_{\Sigma} 
 \left ( \frac{}{} |X|^2 + \left |\frac{\nablah \Psi}{\Psi} \right |^2 + 
2 \beta \left \langle X , \frac{\nablah \Psi}{\Psi} \right \rangle + \beta \divm_{\Sigma} (X)
\right )  \geq 0.
\end{eqnarray*}
\end{lemma}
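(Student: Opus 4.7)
The overall strategy is to write the integrand as a manifest sum of squares (plus a pointwise non-negative multiple of $|\nablah\beta|^2$) by inserting an auxiliary function $c=c(\beta)$ into the completed square and then using (\ref{cond1}) to eliminate the cross terms that are linear in $X$. Abbreviate $Y := \nablah\Psi/\Psi = \nablah(\log\Psi)$, so the integrand reads
\begin{equation*}
|X|^2+2\beta\langle X,Y\rangle+|Y|^2+\beta\,\divm_\Sigma(X).
\end{equation*}

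The plan has four steps. First, for an arbitrary smooth scalar function $c=c(\beta)$ to be selected later, the pointwise identity
\begin{equation*}
|X|^2+2\beta\langle X,Y\rangle+|Y|^2 = |X+\beta Y-c\nablah\beta|^2+(1-\beta^2)|Y|^2-c^2|\nablah\beta|^2+2c\langle X,\nablah\beta\rangle+2c\beta\langle Y,\nablah\beta\rangle
\end{equation*}
holds. Second, integration by parts on the closed manifold $\Sigma$ gives $\int_\Sigma\beta\,\divm_\Sigma(X)=-\int_\Sigma\langle\nablah\beta,X\rangle$, so the $X$-linear terms collect into $\int_\Sigma(2c-1)\langle\nablah\beta,X\rangle$. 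Third, I would invoke (\ref{cond1}) tested against $\phi(\beta)$: integrating the constraint against $\phi$ and integrating by parts yields
\begin{equation*}
\int_\Sigma\phi'(\beta)G(\beta)\langle\nablah\beta,X\rangle=\int_\Sigma\phi'(\beta)G(\beta)V(\beta)|\nablah\beta|^2.
\end{equation*}
Since $G>0$ is smooth, I can solve $\phi'(\beta)G(\beta)=2c(\beta)-1$ on $(-1,1)$, which replaces the $X$-linear integral by $\int_\Sigma(2c-1)V|\nablah\beta|^2$. Fourth, the residual cross term $2c\beta\langle Y,\nablah\beta\rangle$ is absorbed by the identity
\begin{equation*}
(1-\beta^2)|Y|^2+2c\beta\langle Y,\nablah\beta\rangle = (1-\beta^2)\left|Y+\tfrac{c\beta}{1-\beta^2}\nablah\beta\right|^2-\tfrac{c^2\beta^2}{1-\beta^2}|\nablah\beta|^2,
\end{equation*}
which is legal since $|\beta|<1$.

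Combining all four steps, the integral from the statement equals a sum of two non-negative squared integrals plus $\int_\Sigma Q(\beta)|\nablah\beta|^2$, where
\begin{equation*}
Q(\beta) = -\tfrac{c^2}{1-\beta^2}+2cV-V.
\end{equation*}
Viewed as a quadratic in $c$, $Q$ attains its maximum at $c=V(1-\beta^2)$, producing $Q_{\max}=V\bigl(V(1-\beta^2)-1\bigr)$, which is precisely non-negative by hypothesis (\ref{cond2}). Thus setting $c(\beta):=V(\beta)(1-\beta^2)$ makes every contribution non-negative and proves the lemma. The main conceptual obstacle is guessing the correct ansatz: one must insert a $c(\beta)\nablah\beta$ correction into the first square and then match it to the constraint through the choice $\phi'G=2c-1$; once this is done, the optimization in $c$ that reveals (\ref{cond2}) as the sharp condition is essentially forced, and the remaining verifications are algebraic.
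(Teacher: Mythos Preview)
Your argument is correct. The algebra in each of the four steps checks out, and with the choice $c(\beta)=V(\beta)(1-\beta^2)$ the coefficient $Q(\beta)$ coincides exactly with the left-hand side of \eqref{cond2}.

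The paper's proof follows a related but structurally different route. Rather than completing squares with an undetermined parameter $c$ and optimizing at the end, the paper performs a change of variables at the outset: it replaces $X$ by $Y:=X-(V\circ\beta)\nabla^{\Sigma}\beta$ (precisely the vector field appearing in \eqref{cond1}) and $\Psi$ by $\Phi:=\Psi/(B\circ\beta)$, where $B$ is any positive solution of $B'/B+xV=0$. In these variables the integrand becomes the manifestly non-negative quadratic $|Y|^2+|\nabla^{\Sigma}\Phi/\Phi|^2+2\beta\langle Y,\nabla^{\Sigma}\Phi/\Phi\rangle$ (non-negative because $|\beta|<1$), plus a total divergence, plus a multiple of $\div_{\Sigma}((G\circ\beta)Y)$ that vanishes pointwise by \eqref{cond1}, plus the same term $|\nabla^{\Sigma}\beta|^2\,V(V(1-\beta^2)-1)$ that you obtain. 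Your approach has the advantage of being systematic---the optimal $c$ is discovered rather than guessed, and the appearance of \eqref{cond2} as the sharp threshold is transparent. The paper's approach, by contrast, is more conceptual: it identifies $(Y,\Phi)$ as the natural variables in which the original ``positive--indefinite'' quadratic $|X|^2+|\nabla^{\Sigma}\Psi/\Psi|^2+2\beta\langle X,\nabla^{\Sigma}\Psi/\Psi\rangle$ retains its form, at the cost of the auxiliary ODE for $B$ and a somewhat opaque choice of the multiplier $Q$.
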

After the proof we will apply this lemma to the last two lines of the main formula in Theorem \ref{thm:main}.

\begin{proof}
Define
\begin{eqnarray}
\F=  |X|^2 + \left |\frac{\nablah \Psi}{\Psi} \right |^2 + 
2 \beta \left \langle X , \frac{\nablah \Psi}{\Psi} 
\right \rangle  + \beta \div_{\Sigma} (X). 
\label{defA}
\end{eqnarray}
Let $x$ be the coordinate in $(-1,1)$ and prime
the derivative with respect to $x$. 
Define $B \in C^{\infty} ((-1,1), \mathbb{R}^{+})$ as any positive solution
of 
\begin{eqnarray}
\frac{B'}{B} + x V =0. \label{defB}
\end{eqnarray}
Introduce also a positive scalar function $\Phi$
and a vector field $Y$ by
\begin{align}
\Phi & = \frac{\Psi}{B \circ
\beta } \nonumber\\ 
Y & = X - \left ( V \circ \beta \right ) \nabla^{\Sigma} \beta.
\label{decomU} 
\end{align}
Inserting this decomposition 
into (\ref{defA}) yields, after using (\ref{defB}),
\begin{align*}
\F = & |Y|^2 + \left | \frac{\nablah \Phi}{\Phi} \right |^2
+ 2 \beta \left \langle Y , \frac{\nablah \Phi}{\Phi} \right \rangle
+ 2  \langle Y, \nablah \beta \rangle  \left ( ( 1 -x^2 ) V  \right )
\circ \beta +  \\
& 
+ \left | \nablah \beta  \right |^2
\left ( (1 - x^2 ) V^2 \right)
\circ \beta + \beta \div_{\Sigma} \left ( 
(V \circ \beta) \nablah \beta + Y \right ).
\end{align*}
Rewriting the last term gives:
\begin{align}
 \F = &  |Y|^2 + \left | \frac{\nablah \Phi}{\Phi} \right |^2
+ 2 \beta \left \langle Y , \frac{\nablah \Phi}{\Phi} \right \rangle
+  \langle Y , \nablah \beta \rangle \left ( - 1 + 2 \left (1 - x^2  \right )
V \right ) \circ \beta \nonumber \\
& + \left | \nablah \beta  \right |^2 \left  [ V \left (
V \left ( 1 - x^2 \right ) - 1 \right ) \right ] \circ \beta
+ \div_{\Sigma} \left ( \beta (V \circ \beta) \nablah \beta + \beta Y \right ).
\label{calA3}
\end{align}
Now, for any function $Q \in C^{\infty}( (-1,1), \mathbb{R})$ 
and vector field $Z$ on $\Sigma$, we have the immediate
identity
\begin{eqnarray}
(Q' \circ \beta) \langle Z,  \nablah \beta  \rangle =
\div_{\Sigma} \left ( (Q \circ \beta) Z   \right ) 
- (Q \circ \beta) \div_{\Sigma} (Z).
\label{identity1}
\end{eqnarray}
Let
$Z = \left ( G \circ \beta \right )  Y$, 
with $G \in C^{\infty} ( (-1,1), \mathbb{R}^+)$ so that
(\ref{identity1}) transforms into
\begin{eqnarray}
\label{identity2}
\left ( ( G Q' ) \circ \beta \right ) \langle Y , \nablah \beta \rangle
= \div_{\Sigma} \left ( ((G Q) \circ \beta) Y \right )
- (Q \circ \beta) \div_{\Sigma} \left ( (G \circ \beta) Y \right ) .
\end{eqnarray}
Choosing $Q$ to satisfy
\begin{eqnarray*}
Q' = \frac{1}{G} \left ( 1 - 2 (1 -x^2) V \right ),
\end{eqnarray*}
we can insert (\ref{identity2}) into (\ref{calA3}) to find
\begin{align*}
\F = & |Y|^2 + \left | \frac{\nablah \Phi}{\Phi} \right |^2
+ 2 \beta \left \langle  Y,\frac{\nablah \Phi}{\Phi} \right \rangle 
+ \div_{\Sigma} \left (  \beta (V \circ \beta) \nablah \beta 
- ( ( G Q - x ) \circ  \beta) Y \right )  \\
& + (Q \circ \beta) \div_{\Sigma} \left ( (G \circ \beta) Y
\right ) + \left | \nablah \beta  \right |^2 \left  ( V \left (
V ( 1 - x^2 ) - 1 \right )  \circ \beta \right ),
\end{align*}
from which it follows that conditions (\ref{cond1}) and (\ref{cond2}) imply
$\int_{\Sigma} \F  \geq 0$, by the divergence theorem.
\end{proof}

We can now apply this lemma to find sufficient conditions for
the monotonicity of the Hawking mass for a uniformly area expanding family of
surfaces. Recall first that for any
orthonormal basis $\{ \vec{\nu}, \vec{\nu}^{\perp} \}$ with $\vec \nu$
spacelike, the associated connection one-form is defined
as
\begin{eqnarray*}
\alpha_{\vec \nu} (X) = \langle \nabla_X \vec{\nu}, \vec{\nu}^{\perp} \rangle.
\end{eqnarray*}
Oriented orthonormal bases $\{ \vec{\nu}_{\theta}, \vec{\nu}_{\theta}^{\perp} \}
$ of the normal bundle, with outward-spacelike $\vec{\nu}_{\theta}$, are in 
one-to-one correspondence with smooth functions $\theta : \Sigma
\longrightarrow \mathbb{R}$ according to
\begin{equation*}
\vec{\nu}_{\theta} = \cosh \theta \, \vec{\nu}_H + \sinh \theta \, 
\vec{\nu}^{\bot}_H. 
\end{equation*}
Recall (equation (3.3) of \cite{TF}) that the connection one-form
$\alpha_{\vec \nu_\theta}$ relates to $\alpha_{H}$ as 
\begin{eqnarray}
\alpha_{\vec\nu_\theta} = \alpha_{H} - d \theta. \label{ConnectionTransform}
\end{eqnarray}

\begin{proposition}
\label{prop_suff}
Consider a  uniformly
area expanding family of surfaces with orthogonal flow vector
\begin{eqnarray}
\vec{\xi} = \vec{I} + \beta \vec{I}^{\perp}, \quad \quad |\beta| < 1
\label{vecxi} 
\end{eqnarray}
and let $\vec \nu_{\xi} = \frac{\vec{\xi}}{|\vec{\xi}|}$. The
Hawking mass is monotonic along this flow provided
$\divm_{\Sigma} (\alpha_{\vec \nu_{\Theta}} )=0$, 
where 
\begin{eqnarray} 
\vec{\nu}_{\Theta} = \cosh ( \Theta \circ \beta) \,  \vec{\nu}_H - 
\sinh (\Theta \circ \beta) \,
\vec{\nu}_H^{\bot} \label{case1}
\end{eqnarray}
or
\begin{eqnarray} 
\vec{\nu}_{\Theta} = \cosh (\Theta \circ \beta)) \, \vec{\nu_{\xi}}
+ \sinh (\Theta \circ \beta)
\, \vec{\nu}_{\xi}^{\bot} \label{case2}
\end{eqnarray}
and $\Theta \in C^{\infty} ((-1,1),\mathbb{R})$
is any nondecreasing function.
\end{proposition}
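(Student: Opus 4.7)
The plan is to combine Theorem \ref{thm:main} with Lemma \ref{Sufficient} via the transformation rule (\ref{ConnectionTransform}). First, under the assumption $|\beta|<1$, lines 1--3 of the main formula in Theorem \ref{thm:main} are already nonnegative: line 1 by Gauss--Bonnet and $\chi(\Sigma)\le 2$; line 2 by the dominant energy condition, since both $-\vec H^\perp$ and $\vec\xi^\perp$ are future-causal; and line 3 because the quadratic form $|\mathring\II_r|^2 + 2\beta\langle\mathring\II_r,\mathring\II_t\rangle + |\mathring\II_t|^2$ is pointwise nonnegative whenever $|\beta|\le 1$. Hence monotonicity of the Hawking mass reduces to showing that lines 4 and 5 sum to a nonnegative integral, which is exactly $2\int_{\Sigma}\F$ from (\ref{defA}) under the identifications $X=\alpha_H^{\sharp}$ and $\Psi = H$.

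For each of the two choices (\ref{case1}) and (\ref{case2}), I will apply (\ref{ConnectionTransform}) to write $\alpha_{\vec\nu_\Theta} = \alpha_H - d\theta$ for a specific $\theta\in C^\infty(\Sigma)$, then read off $G$ and $V$ for which the hypothesis $\div_{\Sigma}(\alpha_{\vec\nu_\Theta}^{\sharp})=0$ coincides with (\ref{cond1}), and finally verify (\ref{cond2}).

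In Case 1, $\theta = -\Theta\circ\beta$, so $\alpha_{\vec\nu_\Theta} = \alpha_H + (\Theta'\circ\beta)\,d\beta$. Taking $G\equiv 1$ and $V = -\Theta'$, the hypothesis $\div_\Sigma(\alpha_{\vec\nu_\Theta}^{\sharp})=0$ is exactly (\ref{cond1}), and (\ref{cond2}) becomes $\Theta'(x)\bigl(1 + \Theta'(x)(1-x^2)\bigr)\ge 0$, which is immediate from $\Theta'\ge 0$ and $|x|<1$.

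In Case 2, a short computation using Definition \ref{perp} shows $\vec\nu_\xi = \cosh\phi\, \vec\nu_H + \sinh\phi\, \vec\nu_H^\perp$ with $\phi = \tanh^{-1}\beta$, and $\vec\nu_\xi^\perp = \sinh\phi\,\vec\nu_H + \cosh\phi\,\vec\nu_H^\perp$. Composing the two hyperbolic rotations in (\ref{case2}) gives $\vec\nu_\Theta = \cosh\theta\,\vec\nu_H + \sinh\theta\,\vec\nu_H^\perp$ with $\theta = \tanh^{-1}\beta + \Theta\circ\beta$, hence $\alpha_{\vec\nu_\Theta} = \alpha_H - \bigl(\frac{1}{1-\beta^2} + \Theta'\circ\beta\bigr) d\beta$. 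Taking $G\equiv 1$ and $V(x)=\frac{1}{1-x^2}+\Theta'(x)$, condition (\ref{cond2}) reduces to $V(x)\cdot\Theta'(x)(1-x^2)\ge 0$, again clear from $\Theta'\ge 0$ and $V>0$. The main obstacle I anticipate is the sign bookkeeping in Case 2---correctly applying Definition \ref{perp} in the $\vec\nu_\xi$ frame and ensuring the hyperbolic-rotation composition really equals $\tanh^{-1}\beta + \Theta\circ\beta$ in the $\vec\nu_H$ frame---after which both cases are direct applications of Lemma \ref{Sufficient} with $G\equiv 1$.
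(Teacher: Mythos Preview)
Your proof is correct and follows essentially the same approach as the paper: both apply Lemma \ref{Sufficient} with $X=\alpha_H^\sharp$, $\Psi=H$, and $G\equiv 1$, using the transformation rule (\ref{ConnectionTransform}) to match condition (\ref{cond1}) with $\div_\Sigma(\alpha_{\vec\nu_\Theta})=0$. The only organizational difference is that the paper parametrizes by a single function $F$ with $V=F'$, derives the inequality $F'(F'(1-x^2)-1)\ge 0$, and then splits into the two cases $F'\le 0$ and $F'\ge (1-x^2)^{-1}$ (defining $\Theta=-F$ and $\Theta=F-\operatorname{arctanh}$ respectively), whereas you work backward from each case (\ref{case1}) and (\ref{case2}) and verify (\ref{cond2}) directly; the computations are identical.
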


\begin{remark} The case $\Theta = 0$ in (\ref{case1}) corresponds to the 
time flat case of Theorem \ref{thm:main}. The case $\Theta =0$ in
(\ref{case2}), i.e. when $\div_{\Sigma} (\alpha_{\vec \nu_\xi}) =0$, was 
first discussed in \cite{MMS}. 
As follows from the proof below, the two sets 
(\ref{case1}) and (\ref{case2}) are disjoint. The only subcase that 
places no restrictions on $\beta$ is the time flat condition
$\div_\Sigma(\alpha_H)=0$.
\end{remark}

\begin{example}
Suppose in case (\ref{case1}) the function $\Theta(x)=x$ is chosen.  Then the hyperbolic angle from $\vec \nu_H$ to $\vec \nu_\Theta$ is $-\beta$.  By (\ref{ConnectionTransform}), $\alpha_{\vec \nu_\Theta} = \alpha_H + d\beta.$  The divergence-free condition that guarantees monotonicity reduces to the following Poisson
equation for $\beta$:
$$\Delta_\Sigma \beta = - \div_\Sigma(\alpha_H).$$
\end{example}

\begin{proof}[Proof of Proposition \ref{prop_suff}]
In view of the last two lines 
in the expression for the variation of the Hawking mass
in Theorem \ref{thm:main} we apply 
Lemma \ref{Sufficient} with  $X,\Psi$ defined by
$h(X,\cdot) = \alpha_H(\cdot)$ and $\Psi = H$.

For any $F \in C^{\infty} ((-1,1),\mathbb{R})$, we consider the
basis of normal vectors $\{ \vec{\nu}_{F}, \vec{\nu}_F^{\perp} \}$
defined by
\begin{equation}
\vec{\nu}_{F} = \cosh (F \circ \beta) \, \vec{\nu}_H + \sinh ( F \circ
\beta ) \vec{\nu}^{\bot}_H. \label{defnuF}
\end{equation}
The choice 
$V = F'$  implies $h(Y,\cdot) = \alpha_{\vec\nu_F}(\cdot)$ as a
consequence (\ref{ConnectionTransform}), where $Y$ is defined by (\ref{decomU}). Thus,
Lemma \ref{Sufficient} with  $G=1$ shows that the condition
\begin{eqnarray*}
\div_{\Sigma} \left ( \alpha_{\vec \nu_{F}} \right ) =0
\end{eqnarray*}
ensures monotonicity of the
Hawking mass mass
of a uniformly area expanding family of surfaces 
provided $F'$ satisfies
\begin{eqnarray*}
F'(x) \left ( F'(x) (1- x^2) - 1 \right )  \geq 0\quad \quad
x \in (-1,1).
\end{eqnarray*}
This is equivalent to  (i) $F'(x) \leq 0, \;\forall x \in (-1,1)$ or (ii) 
$F'(x) \geq (1- x^2)^{-1}, \;\forall x \in (-1,1)$. In case (i), let
$\Theta = - F$ and monotonicity for the class (\ref{case1}) is proved.
In case (ii), let $\Theta(x) = F(x) - \mbox{arctanh}(x) $, so that $\Theta' 
\geq 0$ and (\ref{defnuF}) becomes
\begin{equation*}
\vec{\nu}_{F} = \frac{\cosh (\Theta \circ \beta)}{\sqrt{1- \beta^2}} \, 
\left ( \vec \nu_H + \beta \vec \nu_H^{\perp} \right )
+ \frac{\sinh ( \Theta \circ \beta )}{\sqrt{1-\beta^2}} \, 
\left ( \vec \nu_H^{\perp} +  \beta  \vec \nu_H \right ).
\end{equation*}
From (\ref{vecxi}), $\vec \nu_\xi = \frac{1}{\sqrt{1-\beta^2}}
\left ( \vec \nu_H + \beta \vec \nu_H^{\perp} \right )$ 
which proves monotonicity for the class (\ref{case2}). 
\end{proof}

\begin{bibdiv}
 \begin{biblist}

\bib{bartnik}{book}{
	author={Bartnik, R.},
	title={Energy in general relativity},
	series={Tsing-Hua Lectures on
Geometry and Analysis},
	publisher={International Press},
	editor={S.-T. Yau},
	year={1997}
}

\bib{BHMS}{article}{
   author={Bray, H.},
   author={Hayward, S.},
   author={Mars, M.},
   author={Simon, W.},
   title={Generalized inverse mean curvature flows in spacetime},
   journal={Comm. Math. Phys.},
   volume={272},
   date={2007},
   number={1},
   pages={119--138}
}

\bib{TF}{article}{
   author={Bray, H.},
   author={Jauregui, J.},
   title={Time flat surfaces and the monotonicity of the spacetime Hawking mass},
   eprint={http://arxiv.org/abs/1310.8638}
   }

\bib{DK}{article}{
	author={Disconzi, M.},
	author={Khuri, M.},
	title={Boundary value problems for stationary
vacuum metrics and Bartnik's quasi-local mass},
	note={in preparation}
}

\bib{frauendiener}{article}{
   author={Frauendiener, J.},
   title={On the Penrose inequality},
   journal={Phys. Rev. Lett.},
   volume={87},
   date={2001},
   number={10},
   pages={101101, 4}
 }

\bib{geroch}{article}{
	author={Geroch, R.},
	title={Energy extraction},
    journal={Ann. N.Y. Acad. Sci.},
	volume={	224},
	pages={108--117},
	date={1973}
}

\bib{hawking}{article}{
  author={Hawking, S.},
  title={Gravitational radiation in an expanding universe},
  journal={J. Math. Phys.},
  volume={9},
  date={1968}
}

\bib{HI}{article}{
   author={Huisken, G.},
   author={Ilmanen, T.},
   title={The inverse mean curvature flow and the Riemannian Penrose inequality},
   journal={J. Differential Geom.},
   volume={59},
   date={2001},
   number={3},
   pages={353--437}
}

\bib{jang_wald}{article}{
 author = {Jang, P.S.},
 author = {Wald, R.},
 title={The positive energy conjecture and the cosmic censor hypothesis},
 journal={J. Math. Phys.},
 volume={18},
  date={1977},
   pages={41--44}
   }

\bib{MMS}{article}{
   author={Malec, E.},
   author={Mars, M.},
   author={Simon, W.},
   title={On the Penrose inequality for general horizons},
   journal={Phys. Rev. Lett.},
   volume={88},
   date={2002},
   number={12},
   pages={121102-1, 4},
}

\end{biblist}
\end{bibdiv}

\end{document}